\newtheorem{theorem}{Theorem}[section]        
\newtheorem{lemma}[theorem]{Lemma}
\newtheorem{corollary}[theorem]{Corollary}
\newtheorem{proposition}[theorem]{Proposition}
\newtheorem*{main theorem}{Main Theorem}
\theoremstyle{remark}      
\newtheorem*{rem}{Remark}   
\theoremstyle{definition}  
\newtheorem{definition}[theorem]{Definition} 
\newtheorem{example}[theorem]{Example}  
\def\N{\mathbb{N}}
\def\Z{\mathbb{Z}}
\def\iff{\Longleftrightarrow}
\begin{document}
\title{The Pigeonhole Principle and Multicolor Ramsey Numbers} 
\author{Vishal Balaji, \quad Powers Lamb, \quad Andrew Lott, \quad Dhruv Patel,\\ Alex Rice, \quad Sakshi Singh, \quad Christine Rose Ward}
 
\begin{abstract} For integers $k,r\geq 2$, the diagonal Ramsey number $R_r(k)$ is the minimum $N\in\N$ such that every $r$-coloring of the edges of a complete graph on $N$ vertices yields on a monochromatic subgraph on $k$ vertices. Here we make a careful effort of extracting explicit upper bounds for $R_r(k)$ from the pigeonhole principle alone. Our main term improves on previously documented explicit bounds for $r\geq 3$, and we also consider an often ignored secondary term, which allows us to subtract a uniformly bounded below positive proportion of the main term. Asymptotically, we give a self-contained proof that $R_r(k)\leq \left(\frac{3+e}{2}\right)\frac{(r(k-2))!}{\left((k-2)!\right)^r}(1+o_{r\to \infty}(1)),$ and we conclude by noting that our methods combine with previous estimates on $R_r(3)$ to improve the constant $\frac{3+e}{2}$ to $\frac{3+e}{2}-\frac{d}{48}$, where $d=66-R_4(3)\geq 4$. We also compare our formulas, and previously documented formulas, to some collected numerical data.

\end{abstract}

\address{Department of Mathematics, Millsaps College, Jackson, MS 39210}
\email{balajv1@millsaps.edu} 
\email{lambps@millsaps.edu} 
\email{lottal@millsaps.edu}
\email{pateldu@millsaps.edu}    
\email{riceaj@millsaps.edu}
\email{singhs@millsaps.edu} 
\email{wardcr@millsaps.edu} 

\maketitle 
\setlength{\parskip}{5pt}   

\section{Introduction}

The classic \textit{party problem} poses the following question: how many people must be at a party in order to guarantee that there is either a group of three mutual acquaintances, or a group of three mutual strangers? A slightly impractical assumption here is that any two people are either acquainted or not, with no ambiguity. This question can be modeled with graphs, with each party guest represented by a vertex, and edges connecting each pair of guests colored red or blue according to their acquaintance status. With this framing, the desired occurrence is nicely characterized as a triangle formed by edges of all the same color, which we refer to as \textit{monochromatic}. Such an occurrence is not guaranteed amongst five partygoers. Consider the five vertices of a regular pentagon, with edges between consecutive vertices colored blue, and edges between nonconsecutive vertices colored red. Every triangle contains at least one red edge and at least one blue edge. However, with six people, trial and error suggests that such pattern avoidance is impossible. Indeed this is the case, and the standard argument is as follows. 

Consider a party with six people, and consider one particular guest; call them Jordan. Jordan can divide the remaining five people into two groups: those they know, and those they do not. The larger of these groups must have at least three people. This is an application of the pigeonhole principle, with the alternative being that each group has at most two people, in which case the total number of non-Jordan partygoers is at most four. Whichever group has at least three people, let that be the blue acquaintance status, and let its opposite be red. We now have the Jordan vertex connected to three other vertices with blue edges. If there is a blue edge between a pair of these three vertices, then that pair forms a blue triangle with Jordan. Otherwise, the three vertices form a red triangle amongst themselves. In particular, a monochromatic triangle exists no matter what, which completes the proof.

This problem generalizes very naturally. Why just a group of three? Why just two colors? Indeed, the classic party problem is the first nontrivial case of the following general result due to Ramsey \cite{ramsey}, which launched a robust area of combinatorial research. Recall that a \textit{complete graph} is one in which every pair of vertices is connected with an edge, and for $r\in \N$ we refer to a partition of the edges of a complete graph into $r$ pairwise disjoint sets (which we think of as $r$ different colors) as an \textit{$r$-coloring}. 
 
\begin{theorem}[Ramsey's Theorem]\label{ramthm} For $r,k\in \N$, there exists $N\in \N$ such that every $r$-coloring of  the edges of a complete graph on $N$ vertices yields a monochromatic complete subgraph on $k$ vertices. 
\end{theorem}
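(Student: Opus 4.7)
The plan is to prove Ramsey's Theorem by a two-stage induction, leveraging only the pigeonhole principle in the spirit of the party problem argument. First I would handle the two-color case by establishing the stronger off-diagonal statement: for every $k,\ell\in\N$ with $k,\ell\geq 2$, there exists $N=R(k,\ell)\in\N$ such that every red/blue coloring of the edges of $K_N$ contains either a red $K_k$ or a blue $K_\ell$. This is proved by induction on $k+\ell$. The base cases $R(k,2)=k$ and $R(2,\ell)=\ell$ are immediate, since the presence of any blue edge yields a blue $K_2$, while the absence of one forces all edges to be red.

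For the inductive step I would show that $R(k,\ell)\leq R(k-1,\ell)+R(k,\ell-1)$. Given $N=R(k-1,\ell)+R(k,\ell-1)$ vertices and a red/blue coloring of their edges, pick any vertex $v$ and partition its $N-1$ incident edges into red and blue. This is exactly the pigeonhole setup from the party problem: either at least $R(k-1,\ell)$ of the edges incident to $v$ are red, or at least $R(k,\ell-1)$ are blue. In the first case, apply the induction hypothesis to the red neighborhood of $v$ to find either a blue $K_\ell$ (done) or a red $K_{k-1}$, which together with $v$ forms a red $K_k$. The second case is symmetric.

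Next I would reduce the general $r$-color case to the two-color case by induction on $r$, with $r=2$ supplied by the first stage. The key move is to collapse colors: given an $r$-coloring of the edges of a complete graph, treat colors $r-1$ and $r$ as a single ``merged'' color to obtain an $(r-1)$-coloring. Setting $M:=R(k,k)$ from the two-color case, the induction hypothesis produces an $N\in\N$ such that every $(r-1)$-coloring of $K_N$ admits a monochromatic $K_M$. If the color of this $K_M$ is one of the original colors $1,\ldots,r-2$, then we already have a monochromatic $K_k$ inside it. Otherwise the $K_M$ sits in the merged color, meaning its edges are genuinely $2$-colored in colors $r-1$ and $r$, and the two-color result on $K_M$ produces a monochromatic $K_k$ in one of these two colors.

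The main obstacle is not any single hard computation but rather the organization of the double induction: one must be careful to keep the two stages independent, so that the off-diagonal two-color statement is available as a black box before launching the induction on $r$. I would emphasize only qualitative existence here, since the theorem as stated makes no numerical claim; the explicit control over $N$ in terms of $r$ and $k$, together with the careful pigeonhole bookkeeping that keeps the constants small, is precisely the refinement that the remainder of the paper pursues.
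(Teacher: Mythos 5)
Your proof is correct, but the way you handle three or more colors is genuinely different from the route the paper takes. Your two-color stage (induction on $k+\ell$ via $R(k,\ell)\leq R(k-1,\ell)+R(k,\ell-1)$, with the pigeonhole applied to the red and blue neighborhoods of a fixed vertex) is exactly the paper's Proposition \ref{twocolors} together with the $r=2$ case of Proposition \ref{recurs}. For $r\geq 3$, however, the paper does not merge colors: it fixes one vertex and partitions its $N-1$ neighbors into all $r$ color classes simultaneously, invoking the asymmetric pigeonhole principle (Proposition \ref{APHP}) to get the single recursion $R_r(k_1,\dots,k_r)\leq R_r(k_1-1,\dots,k_r)+\cdots+R_r(k_1,\dots,k_r-1)-(r-2)$, and then closes the induction on $k_1+\cdots+k_r$ with the ``ignore $2$'s'' reduction of Proposition \ref{ignore2s}. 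Your color-collapsing argument instead nests the two-color theorem inside the $(r-1)$-color theorem, taking the target clique size to be $M=R(k,k)$. Both are valid proofs of the qualitative statement, and yours is arguably easier to organize since the two stages are cleanly separated; the price is quantitative. The nesting produces bounds of the shape $N_r\approx R_2\bigl(R_2(\cdots R_2(k)\cdots)\bigr)$, an $(r-1)$-fold iteration of an exponential function, whereas the simultaneous $r$-color pigeonhole keeps $N$ of multinomial size as in \eqref{cbound}, namely about $\frac{(r(k-1))!}{\left((k-1)!\right)^r}$. Since extracting and sharpening exactly these multinomial-type constants is the entire project of the paper, the simultaneous recursion is the version you would need going forward; for the theorem as literally stated, your argument is complete. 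One small point to make explicit if you write it up: you use that a monochromatic $K_M$ in an unmerged color contains a $K_k$, which requires $M=R(k,k)\geq k$ --- true, but worth a sentence.
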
 

As stated, Theorem \ref{ramthm} is purely qualitative. However, the existence of $N\in \N$ that satisfies the conclusion implies the existence of a \textit{first} $N\in \N$ that satisfies the conclusion, in other words a ``breaking point" at which the desired pattern switches from non-guaranteed to guaranteed. 

The following definition captures this breaking point, allowing for an arbitrary number of colors, and also allowing for different desired pattern sizes in each color. 

\begin{definition} For $r,k_1,\dots,k_r\in \N$, we define the \textit{Ramsey number} $R_r(k_1,\dots,k_r)$ to be the minimum $N\in \N$ such that every $r$-coloring of the edges of  a complete graph on $N$ vertices yields a monochromatic complete subgraph on $k_i$ vertices in color $i$ for some $1\leq i \leq r$. For $r,k\in \N$, we define the \textit{diagonal Ramsey number} $R_r(k)=R_r(k,\dots,k)$.  

\end{definition}

Armed with this notation, the solution to the original party problem can be summarized as $R_2(3)=6$. In this paper, we attempt a thorough, self-contained investigation of the extent to which explicit upper bounds on Ramsey numbers can be extracted from the pigeonhole principle; in other words an appropriate adaptation of the argument outlined in the opening paragraphs. For an appetizer of sorts, a special case of our results is as follows, in which $o(1)$ denotes an unspecified function tending to $0$ as $r\to \infty$. 

\begin{theorem}\label{appetizer} For an integer $r\geq 2$,  \begin{equation*}R_r(4)\leq \frac{(2r)!}{2^r}\left(\frac{10}{3}-\frac{2(r-2)}{(2r-1)(2r-3)} \right),  \end{equation*} and asymptotically \begin{equation*}R_r(4)\leq \frac{(2r)!}{2^r}\left(\frac{3+e}{2} \right)(1+o(1)). \end{equation*} 
%\begin{equation*} c_r=(3-e_r)(e_r-1)-\frac{e_r-1}{r!}+\sum_{j=1}^{r-3}\frac{(3-e_{r-j})(r-j)(r-1+j)!}{r!j!2^j}>(3-e)e^{r/e}(1-o_{r\to \infty}(1)).
%\end{equation*}
\end{theorem}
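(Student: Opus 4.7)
My plan is to prove both bounds by induction on $r$, built on the pigeonhole recursion at a single vertex.

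The recursion mirrors the introduction's $R_2(3) = 6$ argument. For any $r$-coloring of $K_N$ avoiding all monochromatic patterns of interest, fix a vertex $v$: its $N-1$ edges partition by color, and the color-$i$ neighbors of $v$ must avoid both $K_{k_i-1}$ in color $i$ (else $v$ completes a $K_{k_i}$) and $K_{k_j}$ in every color $j \neq i$. Hence the color-$i$ neighbor count satisfies $n_i \leq R_r(k_1, \ldots, k_i-1, \ldots, k_r) - 1$, and summing yields $R_r(k_1, \ldots, k_r) \leq \sum_i R_r(k_1, \ldots, k_i-1, \ldots, k_r) - (r-2)$. Introducing the mixed numbers $h_r(s) := R_r(\underbrace{3, \ldots, 3}_s, \underbrace{4, \ldots, 4}_{r-s})$ and using $R_r(2, \ldots) = R_{r-1}(\ldots)$, this specializes to $h_r(s) \leq s\, h_{r-1}(s-1) + (r-s)\, h_r(s+1) - (r-2)$, with $h_r(0) = R_r(4)$ and $h_r(r) = R_r(3)$.

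The central trick is to observe that $\phi(r, s) := (2r-s)!/2^{r-s}$ satisfies the homogeneous identity $\phi(r, s) = s \phi(r-1, s-1) + (r-s) \phi(r, s+1)$, verified by direct calculation. Hence a bound $h_r(s) \leq C \phi(r, s)$ propagates through the recursion: $h_r(s) \leq C[s \phi(r-1, s-1) + (r-s) \phi(r, s+1)] - (r-2) = C \phi(r, s) - (r-2) \leq C \phi(r, s)$. The smallest viable constant is $C = 10/3$: the pigeonhole-derived base values $R_2(3) \leq 6$, $R_2(3, 4) \leq 10$, $R_2(4) \leq 20$ give $h_2(s)/\phi(2, s) \in \{3, 10/3, 10/3\}$ for $s \in \{2, 1, 0\}$, and the boundary $R_r(3) \leq 3r! \leq (10/3) r!$ follows from the analogous recursion $R_r(3) \leq r R_{r-1}(3) - (r-2)$ with $R_1(3) = 3$. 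Induction on $r \geq 2$, with an inner induction on $s$ decreasing from $r$ to $0$, then delivers $R_r(4) \leq (10/3)(2r)!/2^r$.

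To extract the secondary term $-\frac{2(r-2)}{(2r-1)(2r-3)} \phi(r, 0)$, I would find the particular solution $\theta(r, s)$ to the inhomogeneous recursion $\theta(r, s) = s\theta(r-1, s-1) + (r-s)\theta(r, s+1) + (r-2)$ with $\theta(2, \cdot) = 0$, and then verify $h_r(s) \leq (10/3) \phi(r, s) - \theta(r, s)$ by the same induction; the closed form at $s = 0$ should follow from the ratios $\phi(r-1, s-1)/\phi(r, s) = 1/(2r-s)$ and $\phi(r, s+1)/\phi(r, s) = 2/(2r-s)$ together with an educated ansatz matching the claimed rational expression. For the asymptotic bound $\frac{3+e}{2}$, I would replace $R_r(3) \leq 3r!$ with the sharper $R_r(3) = (e+o(1))r!$ (from the same recursion, since $R_r(3)/r! = 3 - \sum_{j=2}^r (j-2)/j! \to e$); the effective uniform constant then becomes a weighted average of the small-$r$ base-case value $10/3$ (driven by $R_1(3)/1! = R_2(3)/2! = 3$) and the asymptotic boundary value $e$, converging to $(3+e)/2$ as $r \to \infty$. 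The main obstacle is identifying $\theta$ in closed form and carefully extracting the weighted-average asymptotics, though both reduce to mechanical verification once the right ansatz is isolated.
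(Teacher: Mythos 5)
Your setup is sound and, for the main term, essentially reproduces the paper's argument in the special case $k=4$: your $\phi(r,s)=(2r-s)!/2^{r-s}$ is exactly the multinomial coefficient $\binom{2r-s}{1,\dots,1,2,\dots,2}$ the paper calls $C_r(3,\dots,3,4,\dots,4)$, your homogeneous identity is the multinomial Pascal relation, and your constants $3$ and $10/3$ are the paper's coefficients $b_{3,3}$ and $b_{4,3}=b_{4,4}$. The double induction (outer on $r$, inner on $s$ decreasing from $r$, with boundary $R_r(3)\leq 3r!$) is valid and yields $R_r(4)\leq \frac{10}{3}\frac{(2r)!}{2^r}$. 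That much is correct.

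The two remaining pieces, however, contain genuine gaps. For the secondary term, your plan is to find the exact particular solution $\theta(r,s)$ of the inhomogeneous recursion in closed form and match it to $\frac{2(r-2)}{(2r-1)(2r-3)}\phi(r,0)=2(r-2)\,r!\,(2r-5)!!$. But the exact solution at $s=0$ is the paper's waste function $w_r(4)$, and it does \emph{not} equal that expression: $w_3(4)=16$ versus $12$, and $w_4(4)=554$ versus $288$. The claimed rational expression is only a \emph{lower bound} on the accumulated waste, obtained in the paper by isolating one particular family of branches in the recursion tree (one term of the sum in Theorem \ref{wasterec}); no clean closed form for the full solution exists, so the ansatz-matching step you defer to would fail as stated, and constructing an explicit subsolution is the actual work. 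For the asymptotic bound, the ``weighted average of $10/3$ and $e$'' heuristic is both unjustified and numerically wrong: $\frac{1}{2}(10/3+e)\approx 3.03$, not $\frac{3+e}{2}\approx 2.86$. The correct decomposition is $3-\frac{3-e}{2}$, which requires (a) the asymptotically sharp main-term constant $3$ — your $10/3$ upper bound is not sharp, and proving $M_r(4)\leq 3\frac{(2r)!}{2^r}(1+o(1))$ needs the exact formula of Theorem \ref{exact} or equivalent — and (b) a quantitative lower bound on the total waste of the form $\frac{3-e}{2}\frac{(2r)!}{2^r}(1-o(1))$, which in the paper is a full page of Stirling approximation and integral comparison over the branch index $j$. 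Neither the equal weighting nor the value $3$ can be read off from the base cases, so this part of the argument is missing rather than merely unpolished.
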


\noindent For those interested in skipping ahead, the first component of Theorem \ref{appetizer} follows from Proposition \ref{diff}, Theorem \ref{main}, and Theorem \ref{wasterec}, while the asymptotic upper bound is a special case of Corollary \ref{asymcor}.

In Section \ref{prelim}, we establish the appropriate basics for the application of the pigeonhole principle to multicolor Ramsey numbers, and in the process we briefly survey some classical results. In Section \ref{mainsec}, we introduce a decomposition of our efforts into a \textit{main term}, $M_r$, and a \textit{waste function}, $w_r$, the latter so named because it captures a beneficial component of the recursive upper bound yielded by the pigeonhole principle that is ignored in some previous explicit bounds. The decomposition takes the form $R_r\leq M_r -w_r$, so our attention is turned to upper bounds on $M_r$ and lower bounds for $w_r$, and our main results of this type are stated in this section. Section \ref{mtsec} is dedicated to the verification of our claims for $M_r$, while Section \ref{wastesec} focuses on $w_r$. We take a computational angle on our efforts in Section \ref{compsec}, and conclude in Section \ref{conc} with a discussion of how our results are compatible with more refined known estimates on $R_r(3)$. 

\section{Preliminaries and classical bounds} \label{prelim}

We begin this section with our most elementary and most central ingredient: a version of the pigeonhole principle in which a collection of elements are distributed into several boxes, with potentially different quotas for each box. This should be thought of as a generalized justification of the fact that when Jordan divided five people into two groups, the larger of the two groups had to have at least three people. Here and throughout the paper, we use $|X|$ to denote the number of elements in a finite set $X$.

\begin{proposition}[Asymmetric pigeonhole principle] \label{APHP} Suppose $n,r,m_1,\dots,m_r\in \N$, and suppose $A_1,\dots,A_r$ are sets with $|A_1\cup\cdots\cup A_r|=n$. If $n>m_1+\cdots+m_r-r$, then $|A_i|\geq m_i$ for some $1\leq i \leq r$.
\end{proposition}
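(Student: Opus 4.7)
The plan is to prove the contrapositive: assuming $|A_i| < m_i$ for every $1 \leq i \leq r$, I will derive that $n \leq m_1 + \cdots + m_r - r$, contradicting the hypothesis $n > m_1 + \cdots + m_r - r$.

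First, I would translate the failure of the conclusion into an arithmetic statement. Since each $|A_i|$ is a nonnegative integer, the assumption $|A_i| < m_i$ is equivalent to $|A_i| \leq m_i - 1$ for every $i$. Summing these $r$ inequalities yields $|A_1| + \cdots + |A_r| \leq (m_1 + \cdots + m_r) - r$.

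Next, I would invoke the subadditivity of cardinality for finite sets, namely the elementary union bound
\begin{equation*}
n = |A_1 \cup \cdots \cup A_r| \leq |A_1| + \cdots + |A_r|,
\end{equation*}
which follows immediately from the fact that each element of the union lies in at least one $A_i$ and hence is counted at least once on the right-hand side. Chaining this with the previous inequality gives $n \leq m_1 + \cdots + m_r - r$, directly contradicting the hypothesis. Therefore the assumption that $|A_i| < m_i$ for all $i$ must fail, which is exactly the desired conclusion.

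There is really no main obstacle here; the proposition is essentially a bookkeeping restatement of the union bound combined with the trivial observation that integers strictly below $m_i$ are at most $m_i - 1$. The only thing to be careful about is to emphasize that the hypothesis is stated in terms of $|A_1 \cup \cdots \cup A_r|$ rather than a disjoint union, so that one must use $\leq$ (not equality) in the union bound step; no hypothesis of disjointness is needed, and none is claimed.
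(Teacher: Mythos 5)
Your proposal is correct and is essentially identical to the paper's own proof: both proceed contrapositively, replace $|A_i| < m_i$ with $|A_i| \leq m_i - 1$ using integrality, and chain the union bound $n = |A_1 \cup \cdots \cup A_r| \leq |A_1| + \cdots + |A_r|$ with the summed inequalities to conclude $n \leq m_1 + \cdots + m_r - r$. No differences worth noting.
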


\begin{proof} Suppose $n,r,m_1,\dots,m_r\in \N$, and suppose $A_1,\dots,A_r$ are sets with $|A_1\cup\dots\cup A_r|=n. $ Proceeding contrapositively, suppose that $|A_i|<m_i$ for all $1\leq i \leq r$. Since $m_i$ is an integer, $|A_i|<m_i$ implies $|A_i|\leq m_i-1$ for all $1\leq i \leq r$. Therefore, 
\begin{equation*}n=|A_1\cup\cdots\cup A_r| \leq |A_1|+\cdots+|A_r| \leq (m_1-1)+\cdots+(m_r-1)=m_1+\cdots+m_r-r,
\end{equation*} and the contrapositive is established. \end{proof}

 We now apply Proposition \ref{APHP} to obtain a recursive upper bound on multicolor Ramsey numbers, which appears implicitly in a paper of Greenwood and Gleason \cite{GG} and appears as Proposition 2.18 in \cite{GR}. This is a generalization of the process in which Jordan divided their fellow partygoers into two groups, and then we focusd on the larger group. The final case analysis in the proof below is analogous to the case analysis we conducted on the three vertices connected to Jordan with blue edges.  We use $K_n$ to denote the complete graph on $n$ vertices.

\begin{proposition}\label{recurs} For integers $r,k_1,\dots,k_r\geq 2$, \begin{equation*} \label{phpr} R_r(k_1,\dots,k_r)\leq R_r(k_1-1,k_2,\dots,k_r)+\cdots+R_r(k_1,k_2,\dots,k_r-1)-(r-2). \end{equation*} \end{proposition}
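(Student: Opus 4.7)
The plan is to apply the asymmetric pigeonhole principle (Proposition \ref{APHP}) to the edges incident to a fixed vertex, directly generalizing the Jordan argument from the introduction. Set $N=R_r(k_1-1,k_2,\dots,k_r)+\cdots+R_r(k_1,k_2,\dots,k_r-1)-(r-2)$ and fix an arbitrary $r$-coloring of the edges of $K_N$; the goal is to produce a monochromatic $K_{k_i}$ in color $i$ for some $1\leq i \leq r$.

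First, I would pick any vertex $v$ of $K_N$ and, for each $1\leq i \leq r$, let $A_i$ be the set of non-$v$ vertices joined to $v$ by an edge of color $i$. The sets $A_1,\dots,A_r$ are pairwise disjoint with $|A_1\cup\cdots\cup A_r|=N-1$. Setting $m_i=R_r(k_1,\dots,k_i-1,\dots,k_r)$, the definition of $N$ gives $m_1+\cdots+m_r=N+(r-2)$, so
\begin{equation*} m_1+\cdots+m_r-r=N-2<N-1. \end{equation*}
Proposition \ref{APHP} therefore yields an index $i$ with $|A_i|\geq m_i=R_r(k_1,\dots,k_i-1,\dots,k_r)$.

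Next, I would restrict the edge coloring to the complete subgraph induced on $A_i$. By the defining property of the Ramsey number $m_i$, this subgraph must contain a monochromatic $K_{k_j}$ in some color $j$ with $j\neq i$, or a monochromatic $K_{k_i-1}$ in color $i$. In the former case we are done immediately. In the latter case, every vertex of the $K_{k_i-1}$ lies in $A_i$ and is thus joined to $v$ by a color-$i$ edge, so adjoining $v$ produces a monochromatic $K_{k_i}$ in color $i$, again finishing the argument.

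I do not anticipate a real obstacle, since each step is a direct application of a stated ingredient; the proof is not by induction, since each $m_i$ is well-defined by Ramsey's theorem (Theorem \ref{ramthm}). The only point demanding care is the bookkeeping around the $-(r-2)$ offset, which must match the $-r$ appearing in Proposition \ref{APHP} together with the $-1$ loss from removing the vertex $v$, and this is exactly what produces the strict inequality $N-1>m_1+\cdots+m_r-r$ required to invoke the asymmetric pigeonhole principle.
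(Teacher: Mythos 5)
Your proof is correct and follows essentially the same approach as the paper's: fix a vertex $v$, partition the remaining $N-1$ vertices by the color of their edge to $v$, apply the asymmetric pigeonhole principle with $m_i=R_r(k_1,\dots,k_i-1,\dots,k_r)$, and finish with the same two-case analysis inside the large class $A_i$. The bookkeeping $N-1=m_1+\cdots+m_r-r+1>m_1+\cdots+m_r-r$ is exactly the paper's computation.
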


\begin{proof}Fix integers $r,k_1,\dots,k_r\geq 2$ and let $m_i=R_r(k_1,\dots,k_i-1,\dots,k_r)$ for each $1\leq i \leq r$. Let $N=m_1+\cdots+m_r-(r-2)$. To prove that $R_r(k_1,\dots,k_r)\leq N$, we must show that every $r$-coloring of $K_N$ yields a monochromatic $K_{k_i}$ subgraph in color $i$ for some $1\leq i \leq r$. To this end, fix an $r$-coloring of $K_N$, fix a particular vertex $v$, and group the remaining $N-1$ vertices into sets $A_1,\dots,A_r$ based on the color of the edge connecting them with $v$.  Since $N-1>m_1+\cdots+m_r-r$, Proposition \ref{APHP} yields that $|A_i|\geq m_i$ for some $1\leq i \leq r$. In particular, amongst the vertices in $A_i$, there is either a monochromatic $K_{k_i-1}$ subgraph in color $i$, or there is a monochromatic $K_{k_j}$ subgraph in color $j$ for some $j\neq i$. In the latter case, we are done. In the former case, because the edges connecting $v$ to the vertices of $A_i$ are all color $i$, the set of vertices $A_i\cup\{v\}$ determines a monochromatic $K_{k_i}$ subgraph in color $i$, which completes the proof. 
\end{proof}

In the case $r=2$, Proposition \ref{recurs} yields an inequality that is reminiscent of the Pascal's triangle relation obeyed by binomial coefficients. This observation can be made precise with the following explicit upper bound on classical Ramsey numbers, first established by Erd\H{o}s and Szekeres \cite{ES}.

\begin{proposition}\label{twocolors} For $k_1,k_2\in \N$, \begin{equation*} R_2(k_1,k_2)\leq {k_1+k_2-2 \choose k_1-1}. \end{equation*}

\end{proposition}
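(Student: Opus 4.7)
The plan is to prove the bound by strong induction on $k_1+k_2$, using Proposition \ref{recurs} specialized to $r=2$ as the recursive engine and Pascal's identity to propagate the binomial coefficient bound.

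First I would dispose of the boundary cases where $\min(k_1,k_2)=1$. Here the graph $K_1$ trivially contains a monochromatic complete subgraph on $1$ vertex (namely itself), so $R_2(1,k)=R_2(k,1)=1$, which matches $\binom{k-1}{0}=\binom{k-1}{k-1}=1$. These are the base cases for the induction, so the induction hypothesis will cover every pair $(k_1,k_2)$ with $k_1+k_2<n$ for some $n\geq 4$.

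For the inductive step, fix $k_1,k_2\geq 2$ with $k_1+k_2=n$, and observe that Proposition \ref{recurs} with $r=2$ gives
\begin{equation*}
R_2(k_1,k_2)\leq R_2(k_1-1,k_2)+R_2(k_1,k_2-1),
\end{equation*}
since the $-(r-2)$ term vanishes. Both summands on the right are Ramsey numbers whose arguments sum to $n-1<n$, so the induction hypothesis applies (noting that if $k_1-1=1$ or $k_2-1=1$ we fall back on the boundary computation above) and yields
\begin{equation*}
R_2(k_1,k_2)\leq \binom{k_1+k_2-3}{k_1-2}+\binom{k_1+k_2-3}{k_1-1}.
\end{equation*}
Pascal's identity then collapses the right-hand side to $\binom{k_1+k_2-2}{k_1-1}$, completing the induction.

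There is no real obstacle here: the whole argument is driven by Proposition \ref{recurs}, and the rest is matching the recursion to the Pascal triangle relation. The only subtlety worth flagging is making sure the base cases of the induction are chosen so that the recursive step always lands on a pair whose bound has already been established, which is why I prefer the $\min(k_i)=1$ base case over a $\min(k_i)=2$ base case (the latter would also work, using $R_2(2,k)=k=\binom{k}{1}$, but requires an extra bookkeeping step).
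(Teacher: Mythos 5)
Your proof is correct and follows essentially the same route as the paper: induction on $k_1+k_2$, Proposition \ref{recurs} with $r=2$, and Pascal's identity. Your handling of the boundary cases $\min(k_1,k_2)=1$ is in fact slightly more careful than the paper's, which only states the base case $k_1=k_2=1$ explicitly.
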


\begin{proof} We induct on $k_1+k_2$. We first note that for $k_1=k_2=1$, $R_2(1,1)=1$ as any single vertex trivially constitutes a monochromatic $K_1$ subgraph in every color. On the right hand side, we have ${1+1-2 \choose 1-1}={0 \choose 0}=1$, and the base case is established.

\noindent Now suppose the conclusion holds for all $k_1,k_2\in \N$ summing to a particular $s\geq 2$, and fix $k_1,k_2\in \N$ with $k_1+k_2=s+1$. By Proposition \ref{recurs}, we have $R_2(k_1,k_2)\leq R_2(k_1-1,k_2)+R_2(k_1,k_2-1)$, and since $(k_1-1)+k_2=k_1+(k_2-1)=s$, we can invoke the inductive hypothesis for each term on the right hand side. Pairing this with the usual Pascal's triangle relation ${n\choose k}={n-1 \choose k-1} + {n-1 \choose k}$, we have 
\begin{align*} R_2(k_1,k_2)&\leq R_2(k_1-1,k_2)+R_2(k_1,k_2-1) \\ &\leq {(k_1-1)+k_2-2 \choose (k_1-1)-1} + {k_1+(k_2-1)-2 \choose k_1-1} \\ &={k_1+k_2-2 \choose k_1-1},
\end{align*}
as required.
\end{proof}

In order to generalize Proposition \ref{twocolors} to more than two colors, it is natural to consider whether Proposition \ref{recurs} can be used in conjunction with a Pascal's triangle-like relation on multinomial coefficients, which leads us toward the following definition and proposition.

\begin{definition} For $n,r \in \N$ and integers $k_1,\dots,k_r\geq 0$ with $k_1+\cdots+k_r=n$, we define the \textit{multinomial coefficient} ${n \choose k_1,\dots,k_r}$  to be the number of (ordered) partitions of an $n$-element set $A$ into $A_1\cup \cdots \cup A_r$ with $|A_i|=k_i$ for all $1\leq i \leq r$. Equivalently, ${n \choose k_1,\dots,k_r}$ is the coefficient on $x_1^{k_1}\cdots x_r^{k_r}$ in the expansion of $(x_1+\cdots+x_r)^n$.
\end{definition}

We include proofs of the following standard formula and recurrence relation for completeness.
 
\begin{proposition}\label{multin} For $n,r \in \N$ and integers $k_1,\dots,k_r\geq 0$ with $k_1+\cdots+k_r=n$, \begin{equation*} {n \choose k_1,\dots,k_r}=\frac{n!}{k_1!\cdots k_r!}. \end{equation*} Further, if $k_1,\dots,k_r>0$, then \begin{equation*}{n \choose k_1,\dots,k_r}= {n-1 \choose k_1-1,k_2,\dots,k_r}+\cdots +{n-1 \choose k_1,k_2\dots,k_r-1}. \end{equation*}
\end{proposition}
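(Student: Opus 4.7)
The plan is to handle the two claims separately, both via elementary combinatorial counting arguments. For the explicit formula, I would argue by iterated sequential selection. To build an ordered partition $A=A_1\cup\cdots\cup A_r$ of an $n$-element set with $|A_i|=k_i$, I first pick the $k_1$ elements of $A_1$ in $\binom{n}{k_1}$ ways, then the $k_2$ elements of $A_2$ from the remaining $n-k_1$ elements in $\binom{n-k_1}{k_2}$ ways, and so on. The total count is
\[
\prod_{i=1}^{r}\binom{n-(k_1+\cdots+k_{i-1})}{k_i},
\]
and expanding each factor with the standard factorial formula for binomial coefficients produces a telescoping product whose numerators and denominators collapse to $n!/(k_1!\cdots k_r!)$. (Alternatively, I could package this as an induction on $r$ with the binomial case as the base, factoring $\binom{n}{k_1,\dots,k_r}=\binom{n}{k_1}\binom{n-k_1}{k_2,\dots,k_r}$ at the inductive step.)

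For the recurrence, I would give a direct bijective argument. Fix any particular element $x$ in the $n$-element set $A$. Every ordered partition of $A$ into parts of sizes $k_1,\dots,k_r$ places $x$ into exactly one part $A_i$, partitioning the set by which index $i\in\{1,\dots,r\}$ receives $x$. The ordered partitions with $x\in A_i$ are in obvious bijection with ordered partitions of the $(n-1)$-element set $A\setminus\{x\}$ into parts of sizes $k_1,\dots,k_i-1,\dots,k_r$, of which there are $\binom{n-1}{k_1,\dots,k_i-1,\dots,k_r}$ by definition. Summing over $i=1,\dots,r$ counts every ordered partition of $A$ exactly once and yields the recurrence. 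The hypothesis $k_1,\dots,k_r>0$ is used precisely to guarantee that each $k_i-1$ is a legitimate (non-negative) part size, so each multinomial on the right-hand side is defined.

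Since both statements are standard, I do not expect a genuine obstacle; the only thing to guard against is miscounting, which the bijective setup handles cleanly by conditioning on which part contains the fixed element $x$. If a reader prefers an algebraic verification of the recurrence, one can instead substitute the explicit formula from the first part and check that the right-hand side simplifies via $\frac{(n-1)!}{k_1!\cdots k_r!}(k_1+\cdots+k_r)=\frac{n!}{k_1!\cdots k_r!}$, but I would lead with the combinatorial argument for transparency.
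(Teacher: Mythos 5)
Your proposal is correct. The recurrence argument (fixing an element $x$ and conditioning on which part $A_i$ receives it) is exactly the paper's combinatorial proof, and your algebraic fallback via $\frac{(n-1)!}{k_1!\cdots k_r!}(k_1+\cdots+k_r)=\frac{n!}{k_1!\cdots k_r!}$ matches the paper's second verification; for the factorial formula you use sequential binomial selection with a telescoping product, whereas the paper counts the $n!$ orderings of $A$ and quotients by the $k_1!\cdots k_r!$ internal rearrangements of each part, but these are equivalent standard arguments and yours is complete as stated.
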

 
\begin{proof} Fix $n,r \in \N$ and integers $k_1,\dots,k_r\geq 0$ with $k_1+\cdots+k_r=n$. Let $A$ be any set with $|A|=n$. For the first formula, we begin by noting that the total number of orderings of the form $A=\{a_1,a_2,\dots,a_n\}$ is $n!$. Further, every such ordering defines a partition of $A$ into $A=A_1\cup\cdots\cup A_r$ with $|A_i|=k_i$ for all $1\leq i \leq r$ by letting $A_1=\{a_1,\dots,a_{k_1}\}, \ A_2=\{a_{k_1+1},\dots,a_{k_1+k_2}\},\dots,A_r=\{a_{k_1+\cdots+k_{r-1}+1},\dots,a_{n}\}$. However, while the multinomial coefficient accounts for the ordering of the \textit{sets} $A_1,\dots,A_r$, each individual partition does not care about the order in which the \textit{elements} of the sets $A_1,\dots,A_r$ are listed. In particular, an individual partition $A=A_1\cup \cdots \cup A_r$ is counted once for every possible choice of the ordering of the elements of each set, a total of $k_1!k_2!\cdots k_r!$ times. Therefore, $ {n \choose k_1,\dots,k_r}=\frac{n!}{k_1!\cdots k_r!},$ as claimed.

\noindent For the recursive relation, a generalized form of the usual Pascal's triangle relation, we argue both combinatorially and algebraically. Fix a particular $x\in A$. Consider a partition $A=A_1\cup\cdots\cup A_r$ with $|A_i|=k_i$ for all $1\leq i \leq r$. If $x$ is assigned to $A_i$, the remaining $n-1$ elements must be distributed among $A_1,\dots, A_r$, with exactly $k_i-1$ elements in $A_i$ and exactly $k_j$ elements in $A_j$ for all $j\neq i$. In other words, the number of qualifying partitions with $x\in A_i$ is precisely the multinomial coefficient $ {n-1 \choose k_1,\dots,k_i-1,\dots,k_r}$. Since $x$ must be assigned to $A_i$ for some $1\leq i \leq r$, we have $$ {n \choose k_1,\dots,k_r}=\sum_{i=1}^r {n-1 \choose k_1,\dots,k_i-1,\dots,k_r}, $$ as claimed. Alternatively, we can use the previously established formula to observe 
\begin{align*}{n-1 \choose k_1-1,k_2,\dots,k_r}+\cdots +{n-1 \choose k_1,k_2,\dots,k_r-1}
& = \frac{(n-1)!}{(k_1 - 1)!k_2!\cdots k_r!} +  \cdots + \frac{(n-1)!}{k_1!k_2!\cdots (k_r-1)!}
\\  & =\frac{k_1(n-1)!}{k_1!\cdots k_r!} + \cdots + \frac{k_r(n-1)!}{k_1!\cdots k_r!}\\
& =\frac{(k_1 + \cdots + k_r)(n-1)!}{k_1!\cdots k_r!} \\
& =\frac{n(n-1)!}{k_1!\cdots k_r!}=\frac{n!}{k_1!\cdots k_r!}={n \choose k_1, \dots,k_r}.  \qedhere \end{align*}
 \end{proof}

A natural candidate for a generalization of Proposition \ref{twocolors} using Proposition \ref{multin} is the bound \begin{equation} \label{cbound} R_r(k_1,\dots,k_r)\leq {k_1+\cdots+k_r-r \choose k_1-1,\dots,k_r-1}. \end{equation} Indeed \eqref{cbound} holds, as seen by a straightforward adaptation of the induction proof of Proposition \ref{twocolors} above, and it appears in numerous sources in the literature, including Corollary 3 of \cite{GG}. However, for $r\geq 3$, \eqref{cbound} does a surprisingly poor job of capturing upper bounds on Ramsey numbers available from Proposition \ref{recurs}, as we will explore with examples later. One explanation of this phenomenon is the fact that, for $r\geq 3$, the right side of \eqref{cbound} lacks a fundamental property that the left side possesses. Namely, Ramsey numbers ``ignore $2$'s", by which we mean the following.

\begin{proposition}\label{ignore2s} For integers $r,k_1,\dots,k_{r-1}\geq 2$  \begin{equation*} \label{ignore2} R_r(k_1,\dots,k_{r-1},2)= R_{r-1}(k_1,\dots,k_{r-1}). \end{equation*} \end{proposition}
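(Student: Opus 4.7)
The plan is to prove both inequalities $R_r(k_1,\dots,k_{r-1},2)\leq R_{r-1}(k_1,\dots,k_{r-1})$ and $R_r(k_1,\dots,k_{r-1},2)\geq R_{r-1}(k_1,\dots,k_{r-1})$ directly from the definition of the Ramsey number, exploiting the fact that a monochromatic $K_2$ is just a single edge, so avoiding a $K_2$ in color $r$ is the same as not using color $r$ at all.

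Let $N=R_{r-1}(k_1,\dots,k_{r-1})$. For the upper bound, I would fix an arbitrary $r$-coloring of $K_N$. Two cases arise. If some edge receives color $r$, then that edge is itself a monochromatic $K_2$ in color $r$, and we are done. Otherwise no edge is colored $r$, so the coloring uses at most the $r-1$ remaining colors and therefore restricts to an $(r-1)$-coloring of $K_N$; by the defining property of $N=R_{r-1}(k_1,\dots,k_{r-1})$, this restricted coloring contains a monochromatic $K_{k_i}$ in color $i$ for some $1\leq i\leq r-1$. Either way, the required monochromatic subgraph exists, so $R_r(k_1,\dots,k_{r-1},2)\leq N$.

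For the lower bound, I need to produce an $r$-coloring of $K_{N-1}$ that avoids every relevant monochromatic pattern. Since $N-1<R_{r-1}(k_1,\dots,k_{r-1})$, the definition of $R_{r-1}$ supplies an $(r-1)$-coloring of $K_{N-1}$ with no monochromatic $K_{k_i}$ in color $i$ for any $1\leq i \leq r-1$. Viewing this as an $r$-coloring in which color $r$ is assigned to no edge (the corresponding color class is empty), we have in particular no monochromatic $K_2$ in color $r$ either. Hence $R_r(k_1,\dots,k_{r-1},2)>N-1$, giving the matching inequality.

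There is no real obstacle here; the only point requiring care is the convention that an $r$-coloring is allowed to leave some color class empty, so that an $(r-1)$-coloring can legitimately be reinterpreted as an $r$-coloring with an unused color. This is consistent with the paper's definition of an $r$-coloring as a partition into $r$ pairwise disjoint sets, and it is the reason the identity holds at all. Combining the two inequalities yields the claimed equality.
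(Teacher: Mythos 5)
Your proof is correct and follows essentially the same route as the paper's: both directions hinge on the observation that avoiding a monochromatic $K_2$ in color $r$ is equivalent to leaving color $r$ unused, so that the $r$-coloring collapses to an $(r-1)$-coloring. The paper phrases the upper bound as ``the only way to avoid a $K_2$ in color $r$ is to not use color $r$'' rather than as an explicit two-case split, but the content is identical.
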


\begin{proof} Suppose $r,k_1,\dots,k_{r-1}\geq 2$ are integers. Let $N=R_{r-1}(k_1,\dots,k_{r-1})$. An $(r-1)$-coloring of $K_{N-1}$ with no $K_{k_i}$ subgraph in color $i$ for all $1\leq i \leq r-1$ also counts as a valid $r$-coloring of $K_{N-1}$ that simply does not use color $r$, so in particular there are no $K_2$ subgraphs in color $r$. Therefore, $R_r(k_1,\dots,k_{r-1},2)\geq N$. Conversely, for any $r$-coloring of $K_N$, since a $K_2$ subgraph consists of a single edge, the \textit{only} way to avoid a $K_2$ subgraph in color $r$ is to not use color $r$ at all. In this case, we in fact have an $(r-1)$-coloring of $K_N$, which must yield a $K_{k_i}$ subgraph in color $i$ for some $1\leq i \leq r-1$. Therefore, $R_r(k_1,\dots,k_{r-1},2)\leq N$, which completes the proof.\end{proof}

To begin to illustrate our assertion that \eqref{cbound} does not fully capture the capability of the pigeonhole principle in bounding multicolor Ramsey numbers, we consider the following example.

\begin{example}\label{r34} $R_3(4)\leq 272$.

\noindent Repeatedly applying Propositions \ref{recurs}, \ref{ignore2s}, and \ref{twocolors}, and the fact that Ramsey numbers are invariant under coordinate permutation, we see
\begin{align*}R_3(4)&=R_3(4,4,4) \\ &\leq 3R_3(4,4,3)-1 \\ &\leq 6R_3(4,3,3)+ 3R_2(4,4) - 4 \\ & \leq 6R_3(3,3,3)+12R_2(4,3)+3R_2(4,4)-10 \\ & \leq 18R_3(3,3)+12R_2(4,3)+3R_2(4,4)-16 \\ & \leq 18(6)+12(10)+3(20)-16 \\ &=272.
\end{align*}
\end{example}

\begin{rem} In fact it is known that $R_2(4,3)=9$, not $10$ as implied by $R_2(4,3) \leq R_2(3,3) + R_2(4,2)$, due to the fact that the total degree of a graph must be an even number. This consideration generalizes to show that, when $r=2$, the inequality in Proposition \ref{twocolors} is strict if both Ramsey numbers on the right hand side are even, another result that dates back to \cite{GG}, as is the case with $R_2(4,2)=4$ and $R_2(3,3)=6$. This fact, and any other mitigating considerations, can be used when applying Proposition \ref{recurs} to yield improved upper bounds, in this case $R_3(4) \leq 260$. However, in this paper we focus our attention on capturing the strength of the pigeonhole principle alone. 
\end{rem}

In contrast with Example \ref{r34}, plugging in $r=3$ and $k_1=k_2=k_3=4$ to \eqref{cbound} yields $R_3(4)\leq \frac{9!}{3!^3}= 1680$. The spirit of this observation is not new, for example Graham and R\"odl \cite{GR} remark that \eqref{cbound} ``can easily be improved by a factor that tends to $0$ as $r\to \infty$." However, documented examples of this have been difficult to locate in the literature. One example, which helps alleviate the discrepancy between how Ramsey numbers and \eqref{cbound} handle inputs equal to $2$, appears in an unpublished note of Ter\"av\"ainen \cite{joni}. In that note, Proposition \ref{twocolors} is used as a base case for an induction on $r$, and then Proposition \ref{ignore2s} is implicitly used to establish \begin{equation}\label{jonib} R_r(k_1,\dots,k_r)\leq {k_1+\cdots+k_r-2r+2 \choose k_1-1,k_2-1,k_3-2,\dots,k_r-2} \end{equation} for $r,k_1,\dots,k_r\geq 2$. To again compare with Example \ref{r34}, plugging in $r=3$ and $k_1=k_2=k_3=4$ into \eqref{jonib} yields $R_3(4)\leq 560$. In the diagonal case, \eqref{jonib} is asymptotically smaller than \eqref{cbound} by a factor of at least $(r/e)^{r-2}$ as $r\to \infty$. One of our main results, Theorem \ref{main}, can be thought of as achieving the same goal as \eqref{jonib}, the establishment of a multinomial coefficient upper bound that respects Proposition \ref{ignore2s}, but in a more efficient way.

In addition to its lack of exploitation of Proposition \ref{ignore2s}, another potential weakness of \eqref{cbound}, which is shared by \eqref{jonib}, is that it fails to take any advantage of the $r-2$ term in Proposition \ref{recurs}, which works in our favor for $r\geq 3$. An example of this  term in action is the following, which is essentially Theorem 2.20 in \cite{GR}. Here and for the remainder of the paper we use $e_r=\sum_{n=0}^r 1/n!$ to denote the $r$-th partial sum in the Taylor expansion of $e$. 
  
\begin{proposition}\label{r3} For $r\in \N$, $R_r(3)\leq e_rr!+1$.

\end{proposition}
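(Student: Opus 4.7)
The plan is to proceed by induction on $r$, using Proposition \ref{recurs} together with Proposition \ref{ignore2s} to reduce $R_r(3)$ to a multiple of $R_{r-1}(3)$, and then exploiting the telescoping identity $e_r \cdot r! = e_{r-1}\cdot r! + 1$ to absorb the constant slack cleanly.

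For the base case, I would take $r=1$, where $R_1(3)=3$ trivially (the only available color forces any three vertices to form a monochromatic triangle), and the right-hand side is $e_1 \cdot 1! + 1 = (1+1)\cdot 1 + 1 = 3$. (One could equally well start at $r=2$ using Proposition \ref{twocolors} to get $R_2(3)\leq \binom{4}{2}=6 = e_2 \cdot 2! + 1$.)

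For the inductive step, assume $R_{r-1}(3) \leq e_{r-1}(r-1)! + 1$. Applying Proposition \ref{recurs} with $k_1=\cdots=k_r=3$ and exploiting symmetry of Ramsey numbers under permutation of the color list, every term on the right-hand side equals $R_r(3,3,\dots,3,2)$, and by Proposition \ref{ignore2s} each of these equals $R_{r-1}(3)$. Thus
\begin{equation*}
R_r(3) \leq r\cdot R_{r-1}(3) - (r-2) \leq r\bigl(e_{r-1}(r-1)! + 1\bigr) - (r-2) = e_{r-1}\cdot r! + 2.
\end{equation*}
The final step is to observe that $e_r = e_{r-1} + 1/r!$, equivalently $e_{r-1}\cdot r! = e_r\cdot r! - 1$, so $e_{r-1}\cdot r! + 2 = e_r\cdot r! + 1$, which completes the induction.

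There is essentially no obstacle here: the only thing to verify is that the arithmetic works out, and in fact it works out with equality, as the contribution of the $-(r-2)$ term in Proposition \ref{recurs} combines perfectly with the $+1$ carried through the inductive hypothesis to match precisely the one-term growth of the partial sum $e_r - e_{r-1} = 1/r!$ after multiplication by $r!$. This tight fit is exactly why the bound takes this clean form, and it suggests that any improvement would require extracting more from the pigeonhole step than the simple asymmetric majority argument of Proposition \ref{APHP}.
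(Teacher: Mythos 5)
Your proposal is correct and is essentially identical to the paper's own proof: the same induction on $r$ with base case $r=1$, the same reduction via Propositions \ref{recurs} and \ref{ignore2s} to $R_r(3)\leq rR_{r-1}(3)-(r-2)$, and the same telescoping of $e_{r-1}\cdot r!+2$ into $e_r\cdot r!+1$. No gaps.
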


\begin{proof} We induct on $r$. For $r=1$, $R_1(3)$ is the number of vertices required to guarantee a monochromatic triangle in any $1$-coloring, which is $3$. For the right hand side, we have $e_1(1!)+1=(1+1)(1)+1=3$, so the base case is established.

\noindent Now assume the conclusion holds for some particular $r\in \N$. By Proposition \ref{recurs}, the invariance of Ramsey numbers under coordinate permutation, and Proposition \ref{ignore2s}, we have 
\begin{align*}R_{r+1}(3)&\leq R_{r+1}(2,3,\dots,3)+\cdots +R_{r+1}(3,\dots,3,2)-(r+1-2) \\&= (r+1)R_{r+1}(3,\dots,3,2)-(r-1) \\ &=(r+1)R_r(3)-(r-1) \\ &\leq (r+1)(e_rr!+1)-(r-1) \\ &=e_r(r+1)!+1+1 \\ &=\left(e_r+\frac{1}{(r+1)!}\right)(r+1)!+1 \\ &=e_{r+1}(r+1)!+1,
\end{align*} and the induction is complete.
\end{proof}

It is important to note that in our brief survey here, we have focused on only the most elementary tools, and have covered only a tiny portion of the techniques and extensive literature that have developed on Ramsey numbers over the past near century. For an impressively comprehensive treatment, the interested reader is encouraged to browse the referenced survey of Radziszowski \cite{Radz}, updated most recently in January 2021. Some notable examples include improved bounds on diagonal two-color Ramsey numbers by Conlon \cite{conlon} and Sah \cite{sah}, the latter of whom showed $R_2(k)\leq {2k-2 \choose k-1} e^{-c(\log k)^2}$ for a constant $c>0$, upper bounds on $R_r(3)$ due to Xu-Xie-Chen \cite{Xu} and Eliahou \cite{eli} that we discuss in Section \ref{conc}, as well as a great deal of work on lower bounds, modifications for incomplete graphs, hypergraphs, and more.  

\section{Main Results} \label{mainsec}

As indicated in the introduction, we divide the task of efficiently capturing the strength of Proposition \ref{recurs} into an explicit upper bound  on multicolor Ramsey numbers into two components: a \textit{main term}, $M_r$, which accounts for the Ramsey number summands on the right hand side of the inequality in Proposition \ref{recurs}, and a \textit{waste function}, $w_r$, which accounts only for the subtracted $(r-2)$ term. 

\begin{definition}\label{wastedef} We recursively define functions $M_r, w_r:\Z_{\geq2}^r\to \Z_{\geq 0}$ for integers $r\geq 2$ as follows:
 
\begin{enumerate}[(i)] \item $M_2(k_1,k_2)={k_1+k_2-2 \choose k_1-1}$ and $w_2(k_1,k_2)=0$ for all $k_1,k_2\geq 2$

\

\item Both $M_r$ and $w_r$ ``ignore $2$'s", meaning $$M_{r+1}(2,k_1,\dots,k_{r})=M_{r+1}(k_1,2,k_2,\dots,k_r)=\cdots=M_{r+1}(k_1,\dots,k_r,2)=M_{r}(k_1,\dots,k_{r})$$ and $$w_{r+1}(2,k_1,\dots,k_{r})=w_{r+1}(k_1,2,k_2,\dots,k_r)=\cdots=w_{r+1}(k_1,\dots,k_r,2)=w_{r}(k_1,\dots,k_{r})$$ for all $k_1,\dots,k_r\geq 2$

\

\item If $k_1,\dots,k_r\geq 3$, then \begin{equation*}M_r(k_1,\dots,k_r)=M_r(k_1-1,\dots,k_r)+\cdots+M_r(k_1,\dots,k_r-1). \end{equation*} and \begin{equation*}w_r(k_1,\dots,k_r)=w_r(k_1-1,\dots,k_r)+\cdots+w_r(k_1,\dots,k_r-1)+(r-2). \end{equation*}

\

\noindent In Proposition \ref{diff}, we prove via double induction on $r$ and $k_1+\cdots+k_r$ that $M_r$ and $w_r$ are uniquely determined by properties (i)-(iii), and also we have:

\

\item $M_r$ and $w_r$ are invariant under coordinate permutation, just like Ramsey numbers. 

\end{enumerate}   
Also, for a single integer $k\geq 2$, we define $M_r(k)=M_r(k,\dots,k)$ and  $w_r(k)=w_r(k,\dots,k)$.

\end{definition}

Before getting into the weeds with these two components, we first establish that upper bounds for multicolor Ramsey numbers are implied by upper bounds for $M_r$ and lower bounds for $w_r$.

\begin{proposition}\label{diff} The functions $M_r, w_r:\Z_{\geq2}^r\to \Z_{\geq 0}$ for integers $r\geq 2$ are uniquely determined by properties (i)-(iii) in Definition \ref{wastedef}, and are invariant under coordinate permutation. Further, for integers $r,k_1,\dots,k_r\geq 2$, \begin{equation*}R_r(k_1,\dots,k_r)\leq M_r(k_1,\dots,k_r)-w_r(k_1,\dots,k_r). \end{equation*}
\end{proposition}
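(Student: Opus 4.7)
My plan is to address all three claims (uniqueness, coordinate-permutation invariance, and the upper bound $R_r \leq M_r - w_r$) simultaneously via a double induction: an outer induction on the number of colors $r$, and an inner induction on the sum $s = k_1 + \cdots + k_r$.

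The base case $r=2$ settles the entire $r=2$ layer at once. Property (i) forces $M_2(k_1,k_2) = \binom{k_1+k_2-2}{k_1-1}$ and $w_2 \equiv 0$, which are manifestly symmetric in $k_1$ and $k_2$, and the Ramsey bound is exactly Proposition \ref{twocolors}. For the outer inductive step, I assume $M_{r'}$ and $w_{r'}$ are uniquely defined, symmetric, and satisfy the claimed bound for all $r' < r$, and I proceed by inner induction on $s$.

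Within that inner induction, the case where some $k_i = 2$ is handled by property (ii): it forces $M_r(k_1,\dots,k_r)$ and $w_r(k_1,\dots,k_r)$ to equal the (well-defined, symmetric) values of $M_{r-1}$ and $w_{r-1}$ on the remaining coordinates, so symmetry and uniqueness are inherited from the outer hypothesis. Proposition \ref{ignore2s} mirrors this reduction on the Ramsey side, so the outer inductive hypothesis for $r-1$ closes this case of the bound as well. When all $k_i \geq 3$, property (iii) expresses both $M_r(k_1,\dots,k_r)$ and $w_r(k_1,\dots,k_r)$ in terms of their values at $r$ strictly smaller inputs (same $r$, sum $s-1$), so uniqueness and symmetry transfer by the inner inductive hypothesis together with the symmetry of the defining sum. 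For the bound, I apply Proposition \ref{recurs} and then the inner inductive hypothesis to each summand:
\begin{align*}
R_r(k_1,\dots,k_r) &\leq \sum_{i=1}^r R_r(k_1,\dots,k_i-1,\dots,k_r) - (r-2) \\
&\leq \sum_{i=1}^r \bigl(M_r(k_1,\dots,k_i-1,\dots,k_r) - w_r(k_1,\dots,k_i-1,\dots,k_r)\bigr) - (r-2).
\end{align*}
By (iii), the first sum equals $M_r(k_1,\dots,k_r)$ and the second equals $w_r(k_1,\dots,k_r) - (r-2)$; substituting, the two $(r-2)$ terms cancel, and we obtain $R_r(k_1,\dots,k_r) \leq M_r(k_1,\dots,k_r) - w_r(k_1,\dots,k_r)$.

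I do not anticipate any significant obstacle; each step is a direct application of the relevant recursion. The only place requiring care is ensuring that $M_r$ and $w_r$ are established as well-defined and symmetric \emph{before} they appear inside the Ramsey inequality, which is precisely the reason for nesting the inductions as above so that existence, symmetry, and the bound are delivered together at each layer.
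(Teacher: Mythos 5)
Your proposal is correct and follows essentially the same double-induction argument as the paper: outer induction on $r$ with base case given by Proposition \ref{twocolors}, inner induction on $k_1+\cdots+k_r$ with the $\min k_i=2$ case handled by property (ii) and Proposition \ref{ignore2s}, and the recursive step combining Proposition \ref{recurs} with property (iii) so that the $(r-2)$ terms cancel. No substantive differences from the paper's proof.
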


\begin{proof} We prove all three claims with a double induction, an ``outer induction" on the number of colors, $r$, and an ``inner induction" on $k_1+\cdots+k_r$. 

\noindent For the outer base case $r=2$, we have that $M_2(k_1,k_2)={k_1+k_2-2 \choose k_1-1}$ and $w_2(k_1,k_2)=0$ are uniquely determined and invariant under coordinate permutation. Further, the inequality $$R_2(k_1,k_2)\leq M_2(k_1,k_2)-w_2(k_1,k_2)={k_1+k_2-2 \choose k_1-1}$$ is precisely Proposition \ref{twocolors}, and the outer base case is fully established.

\noindent Now suppose all three claims hold for a particular $r\geq 2$ and all $k_1,\dots,k_r\geq 2$. We now verify that all three claims hold for $r+1$ and all $k_1,\dots,k_{r+1}\geq 2$ by induction on $k_1+\cdots+k_{r+1}$. By Proposition \ref{ignore2s} and property (ii) of Definition \ref{wastedef}, we know that all three claims hold for all $k_1,\dots,k_{r+1}$ if $\min\{k_1,\dots,k_{r+1}\}=2$, as this reduces back to the case of $r$ coordinates. In particular, this establishes the inner base case of $k_1=\cdots=k_{r+1}=2$, and allows us to assume moving forward that $k_1,\dots,k_{r+1}\geq 3$. We now assume all three claims hold for all $k_1,\dots,k_{r+1}\geq 2$ adding to a particular $s\geq 2(r+1)$, and we fix $k_1,\dots,k_{r+1} \geq 3$ with $k_1+\cdots+k_{r+1}=s+1$.

\noindent By property (iii), $M_{r+1}(k_1,\dots,k_{r+1})$ and $w_{r+1}(k_1,\dots,k_{r+1})$ are given by sums of values of $M_{r+1}$ and $w_{r+1}$, respectively, with coordinates adding to $s$ (plus a constant in the case of $w_{r+1}$). Since each of these are uniquely determined by inductive hypothesis, so are $M_{r+1}(k_1,\dots,k_{r+1})$ and $w_{r+1}(k_1,\dots,k_{r+1})$. Further, for any permutation $\phi:\{1,\dots,r+1\}\to \{1,\dots,r+1\}$, we have \begin{align*}M_{r+1}(k_{\phi(1)},\dots,k_{\phi(r+1)})&=M_{r+1}(k_{\phi(1)}-1,\dots,k_{\phi(r+1)})+\cdots+ M_{r+1}(k_{\phi(1)},\dots,k_{\phi(r+1)}-1) \\ &=M_{r+1}(k_1,\dots,k_{\phi(1)}-1,\dots,k_{r+1})+\cdots+M_{r+1}(k_1,\dots,k_{\phi(r+1)}-1,\dots,k_{r+1}) \\ &=M_{r+1}(k_1-1,\dots,k_r)+\cdots+M_{r+1}(k_1,\dots,k_{r+1}-1) \\ &=M_{r+1}(k_1,\dots,k_{r+1}),\end{align*} where the coordinates are permuted in the second line to put the indices in order, as allowed by inductive hypothesis, and the summands in the third line are rearranged to match property (iii). The reasoning for $w_{r+1}$ is identical.

\noindent Finally, by Proposition \ref{recurs}, property (iii), and inductive hypothesis, we have \begin{align*}R_{r+1}(k_1,\dots,k_{r+1})\leq& R_r(k_1-1,\dots,k_r)+\cdots+R_r(k_1,\dots,k_r-1)-(r-2) \\ \leq& M_r(k_1-1,\dots,k_r)-w_r(k_1-1,\dots,k_r)+\cdots \\ &+M_r(k_1,\dots,k_r-1)-w_r(k_1,\dots,k_r-1)-(r-2) \\ =&(M_r(k_1-1,\dots,k_r)+\cdots+M_r(k_1,\dots,k_r-1))\\ &-(w_r(k_1-1,\dots,k_r)+\cdots+w_r(k_1,\dots,k_r-1)+(r-2)) \\ =&M_{r+1}(k_1,\dots,k_r)-w_r(k_1,\dots,k_r),\end{align*} and all three claims are established. \end{proof}

As mentioned in Section \ref{prelim}, our first effort in wrestling with $M_r$, proven with the same double induction structure as Proposition \ref{diff}, is similar in spirit to \eqref{jonib}. The improvements in comparison to \eqref{jonib} are gained from a more careful inner induction step. 

\begin{theorem}\label{main} For integers $r\geq 2$ and $k_1\geq k_2\geq \dots \geq k_r\geq 2$,  \begin{equation}\label{genmain} M_r(k_1,\dots,k_r)\leq \frac{(k_1+k_2-2)(k_1+k_2-3)}{(k_1-1)(k_2-1)}  {k_1+\cdots+k_r-2r \choose k_1-2,\dots, k_r-2}. \end{equation}
In particular, for integers $r,k\geq 2$,   \begin{equation}\label{better} M_r(k)\leq \left(4-\frac{2}{(k-1)} \right)\frac{(r(k-2))!}{\left((k-2)!\right)^r}. \end{equation} 
\end{theorem}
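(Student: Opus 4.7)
The plan is a double induction mirroring the scaffolding of Proposition \ref{diff}: an outer induction on the number of colors $r$, with an inner induction on $k_1 + \cdots + k_r$. The outer base case $r = 2$ is an algebraic identity, since the right side of (\ref{genmain}) simplifies to $\binom{k_1 + k_2 - 2}{k_1 - 1}$, which equals $M_2(k_1, k_2)$ by Definition \ref{wastedef}(i). For the outer step, assume (\ref{genmain}) holds for $r$ colors and fix $k_1 \geq \cdots \geq k_{r+1} \geq 2$. The inner base case is when some coordinate equals $2$; by permutation invariance take $k_{r+1} = 2$, so property (ii) of Definition \ref{wastedef} reduces $M_{r+1}(k_1, \dots, k_r, 2)$ to $M_r(k_1, \dots, k_r)$. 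The right side of (\ref{genmain}) for the $(r+1)$-tuple also collapses to the $r$-color expression, since $\sum k_j - 2(r+1)$ is unchanged and the new multinomial slot is $0$, so the outer inductive hypothesis finishes this case.

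Assume henceforth that $k_1, \dots, k_{r+1} \geq 3$. Let $B(k_1, \dots, k_{r+1})$ denote the right side of (\ref{genmain}) extended by permutation invariance, and write $C(a, b) = \frac{(a + b - 2)(a + b - 3)}{(a - 1)(b - 1)}$. Property (iii) of Definition \ref{wastedef} and the inner inductive hypothesis yield
\begin{equation*}
M_{r+1}(k_1, \dots, k_{r+1}) \leq \sum_{i=1}^{r+1} B(k_1, \dots, k_i - 1, \dots, k_{r+1}).
\end{equation*}
Setting $D = \frac{(\sum_j k_j - 2(r+1) - 1)!}{\prod_j (k_j - 2)!}$, a direct expansion rewrites each summand as $C_i'(k_i - 2) D$, where $C_i'$ is $C$ applied to the two largest coordinates of the decremented tuple, and rewrites $B(k_1, \dots, k_{r+1}) = C(k_1, k_2) \sum_j (k_j - 2) D$. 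Hence (\ref{genmain}) for $r + 1$ colors reduces to the inequality
\begin{equation*}
\sum_{i=1}^{r+1} \bigl(C_i' - C(k_1, k_2)\bigr)(k_i - 2) \leq 0.
\end{equation*}

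For $i \geq 3$, decrementing $k_i$ leaves the two largest coordinates unchanged, so $C_i' = C(k_1, k_2)$ and these terms vanish; the burden falls entirely on $i = 1, 2$. A short computation from the closed form of $C$ produces the identities
\begin{equation*}
C(k_1, k_2) - C(k_1 - 1, k_2) = \frac{(k_1 + k_2 - 3)(k_1 - k_2)}{(k_1 - 1)(k_1 - 2)(k_2 - 1)}
\end{equation*}
and $C(k_1, k_2 - 1) - C(k_1, k_2) = \frac{(k_1 + k_2 - 3)(k_1 - k_2)}{(k_1 - 1)(k_2 - 1)(k_2 - 2)}$, together with the boundary identity $C(k, k - 1) = C(k, k)$. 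In the generic case $k_1 > k_2 > k_3$, multiplying the first two differences by $(k_1 - 2)$ and $(k_2 - 2)$ respectively gives exact cancellation; if $k_2 = k_3$ then the sort promotes $k_3$ over the decremented $k_2 - 1$, forcing $C_2' = C(k_1, k_2)$ and leaving only the nonpositive $i = 1$ term; if $k_1 = k_2$ then both $C_1'$ and $C_2'$ equal $C(k_1, k_2)$ via the boundary identity, so all terms vanish.

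I expect the main obstacle to be precisely this subtlety: $C_i'$ can individually \emph{exceed} $C(k_1, k_2)$ when one of the two largest coordinates is decremented, so a naive termwise comparison fails. The improvement over (\ref{jonib}) comes from the fact that the excess in $C_2'$ is exactly offset by the deficit in $C_1'$. Finally, (\ref{better}) follows by setting $k_1 = \cdots = k_r = k$ in (\ref{genmain}): the prefactor collapses to $C(k, k) = \frac{2(2k - 3)}{k - 1} = 4 - \frac{2}{k - 1}$, and the multinomial coefficient becomes $\frac{(r(k - 2))!}{((k - 2)!)^r}$.
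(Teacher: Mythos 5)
Your proposal is correct and follows essentially the same route as the paper: the same double induction, the same prefactor, and the same three-case analysis ($k_1=k_2$; $k_1>k_2=k_3$; $k_1>k_2>k_3$), with your normalized inequality $\sum_i (C_i'-C(k_1,k_2))(k_i-2)\leq 0$ and explicit difference formulas being an equivalent repackaging of the paper's Lemma \ref{alglemma}. The exact cancellation you identify in the generic case, and the boundary identity $C(k,k-1)=C(k,k)$ handling the tied cases, are precisely the two conclusions of that lemma.
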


\noindent The right hand side of \eqref{better} is smaller than the diagonal case of \eqref{jonib} by at least a factor of $\frac{r^2(k-2)^2}{2(2k-3)(k-1)}$. Returning to Example \ref{r34}, and the bounds $R_3(4)\leq 1680$ and $R_3(4)\leq 560$ yielded by \eqref{cbound} and \eqref{jonib}, respectively, plugging in $r=3$ and $k=4$ into \eqref{better} yields $R_3(4)\leq M_3(4)\leq 300$. Further inspection of Example \ref{r34} with our new bifurcated perspective yields the exact value $M_3(4)=288$, while $w_3(4)=16$, so the estimate $M_3(4)\leq 300$ is relatively efficient.

 Further, if in Example \ref{r34} we were to continue applying Proposition \ref{recurs} even after reducing to $2$ coordinates, we would eventually be left with  $18=(3)(2)(3)$ copies of $R_2(4,2)=4$ and $72=(3)(2)(12)$ copies of $R_2(3,2)=3$, which yields $M_3(4)=(18)(4)+(72)(3)=288$. The $(3)(2)$ represents the number of ordered choices of two coordinates, and the final factors of $3$ and $12$, respectively, arise as the number of paths, reducing a coordinate by $1$ each step, from $(4,4,4)$ to $(4,3,2)$ and $(3,3,2)$, respectively. At the expense of the compact, convenient formula provided by Theorem \ref{main}, this observation generalizes to the following exact formula for $M_r$. 

\begin{theorem} \label{exact} For integers $r\geq 2$ and $k_1,\dots,k_r\geq 3$, 
\begin{equation*}M_r(k_1,\dots,k_r)=\sum_{i=1}^r \sum_{\substack{j=1 \\ j\neq i}}^r \sum_{m=3}^{k_j} m {k_1+\cdots+k_r-2r-m+1 \choose k_1-2,\dots,k_i-3,\dots,k_j-m,\dots, k_r-2}.\end{equation*} In particular, for integers $r\geq 2$ and $k\geq 3$,
\begin{equation*}M_r(k)=r(r-1)\sum_{m=3}^k m {r(k-2)-m+1 \choose \underbrace{k-2,\dots, k-2}_{r-2},k-3,k-m}\end{equation*}
\end{theorem}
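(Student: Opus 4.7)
The plan is to formalize the combinatorial interpretation sketched in the paragraph preceding the theorem statement. Specifically, I would view $M_r(k_1,\ldots,k_r)$ as the total weight obtained by fully unfolding the recursion given by properties (ii) and (iii) of Definition \ref{wastedef} until every branch terminates at an instance of rule (i) of the form $M_2(m,2)=m$, and then organize the resulting sum of terminal values.

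The first step is to describe this unfolding precisely. Each application of rule (iii) branches $M_r(k_1,\ldots,k_r)$ into $r$ summands, one for each decrement of a coordinate; property (ii) then non-branchingly drops any coordinate that reaches $2$, provided at least three coordinates remain. Continuing in this way, each leaf of the full recursion tree corresponds to an ordered sequence of decrements applied to the initial tuple $(k_1,\ldots,k_r)$ in which each decrement reduces some coordinate with current value at least $3$ by $1$, and the sequence terminates at the first moment exactly $r-1$ coordinates have reached $2$. The corresponding leaf value is then the value $m\geq 3$ of the unique remaining \emph{survivor} coordinate.

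The second step is to classify leaves by three parameters: the position $j$ of the survivor, the position $i$ of the coordinate decremented at the final step (the one dropping from $3$ to $2$), and the value $m$ of the survivor. For fixed $(i,j,m)$, leaves correspond to orderings of a prefix of $k_1+\cdots+k_r-2r-m+1$ decrements, containing $k_\ell-2$ decrements of coordinate $\ell$ for $\ell\notin\{i,j\}$, $k_i-3$ decrements of coordinate $i$, and $k_j-m$ decrements of coordinate $j$. Any such ordering is valid, because the number of coordinates at $2$ is monotone non-decreasing and reaches only $r-2$ at the end of the prefix, so the recursion cannot terminate prematurely during the prefix. The number of such orderings is precisely the multinomial coefficient in the theorem, and summing the contribution $m$ over all $(i,j,m)$ yields the formula; the diagonal case follows by setting all $k_\ell$ equal, which collapses the sum over $(i,j)$ to a factor of $r(r-1)$.

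The main obstacle will be rigorously justifying the bijection between leaves of the recursion tree and labeled decrement sequences, in particular ensuring each leaf is counted exactly once and accounting carefully for how rule (ii) reductions interleave with rule (iii) branchings. A plausible alternative, which sidesteps the combinatorial bookkeeping, is a double induction in the style of Proposition \ref{diff}, using the multinomial Pascal-type identity from Proposition \ref{multin} to show directly that the right-hand side of the formula satisfies the recursion defining $M_r$; this route instead requires a tedious case split when some $k_\ell$ equals $3$ (which triggers rule (ii)) as well as a separate verification of the $r=2$ base case via a binomial identity.
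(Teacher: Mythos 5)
Your proposal is correct and follows essentially the same route as the paper: both unfold the recursion from Definition \ref{wastedef} into a tree of decrement sequences, classify the terminal leaves by the penultimate state $(2,\dots,3,\dots,m,\dots,2)$ (equivalently, by your triple of survivor position $j$, final decremented position $i$, and survivor value $m$), and count the admissible prefixes by the stated multinomial coefficient. Your bookkeeping via ordered pairs $(i,j)$ also reproduces, in a slightly cleaner form, the paper's observation that the $m=3$ leaves are double-counted by swapping the roles of $i$ and $j$.
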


\noindent Theorem \ref{exact}, or a separate double induction proof, gives the following lower bound on $M_r$, which in particular assures that the convenient formula in Theorem \ref{main} is within a factor of $4/3$ of the true value of $M_r$ in the diagonal case.

\begin{corollary}\label{lbcor} For integers $r\geq 2$ and $k_1,\dots,k_r\geq 3$, \begin{equation*}M_r(k_1,\dots,k_r)\geq 3{k_1+\cdots+k_r-2r \choose k_1-2,\dots,k_r-2}. \end{equation*}
\end{corollary}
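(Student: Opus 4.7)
The plan is to prove the lower bound by double induction, mirroring the proof of Proposition \ref{diff}: an outer induction on $r \geq 2$ and, within each outer step, an inner induction on $K := k_1+\cdots+k_r$. Write $B_r(k_1,\dots,k_r) := 3\binom{K-2r}{k_1-2,\dots,k_r-2}$ for the target lower bound. Under the convention that a multinomial coefficient containing a $0$ entry collapses to the multinomial on the remaining entries, $B_r$ ``ignores $2$'s'' in a manner parallel to property (ii) of Definition \ref{wastedef}: if one coordinate equals $2$, then $B_r$ evaluates to $B_{r-1}$ on the remaining $r-1$ coordinates.

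For the outer base case $r=2$, the claim $\binom{k_1+k_2-2}{k_1-1} \geq 3\binom{k_1+k_2-4}{k_1-2}$ simplifies (via cancellation of factorials) to $(k_1+k_2-2)(k_1+k_2-3) \geq 3(k_1-1)(k_2-1)$. Writing $a = k_1-1$ and $b = k_2-1$ with $a,b \geq 2$, this becomes $a^2 - ab + b^2 \geq a+b$, which follows from $a^2 - ab + b^2 \geq ab$ (since $(a-b)^2 \geq 0$) combined with $ab \geq a+b$ (since $(a-1)(b-1) \geq 1$). For the outer inductive step, assume the result for $r$ colors and carry out inner induction on $K$. The inner base case $k_1 = \cdots = k_{r+1} = 3$ holds because properties (iii) and (ii) together give $M_{r+1}(3,\dots,3) = (r+1)M_r(3,\dots,3) \geq 3(r+1)! = B_{r+1}(3,\dots,3)$ by the outer inductive hypothesis. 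For the inner inductive step, with all $k_i \geq 3$ and at least one $k_i \geq 4$, property (iii) expands $M_{r+1}(k_1,\dots,k_{r+1}) = \sum_i M_{r+1}(k_1,\dots,k_i-1,\dots,k_{r+1})$; each summand with $k_i \geq 4$ is bounded below by $B_{r+1}(k_1,\dots,k_i-1,\dots,k_{r+1})$ via the inner inductive hypothesis, and each summand with $k_i = 3$ reduces via property (ii) to $M_r$ on the remaining coordinates, bounded below via the outer inductive hypothesis by $B_r$ on those same coordinates.

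The crucial remaining ingredient, and the place where most of the bookkeeping occurs, is the Pascal-type identity
\[
B_{r+1}(k_1,\dots,k_{r+1}) = \sum_{i:\, k_i \geq 4} B_{r+1}(k_1,\dots,k_i-1,\dots,k_{r+1}) + \sum_{i:\, k_i = 3} B_r(\text{coordinates with } k_i \text{ removed}),
\]
which combines with the lower bounds above to yield $M_{r+1} \geq B_{r+1}$ and close the induction. This identity follows from the multinomial Pascal relation (Proposition \ref{multin}) applied to $\binom{K-2(r+1)}{k_1-2,\dots,k_{r+1}-2}$: the $i$-th term of the expansion has $k_i-3$ in the $i$-th slot, which is a positive entry yielding a $B_{r+1}$ term when $k_i \geq 4$, or $0$ when $k_i = 3$, in which case the multinomial collapses to one identifiable with $B_r$ on the remaining coordinates. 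The main obstacle is simply tracking this split cleanly; once the identity is verified, the double induction goes through with no further computation.
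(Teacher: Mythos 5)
Your proof is correct. It is worth noting how it relates to the paper's treatment: the paper's displayed proof of Corollary \ref{lbcor} derives the bound from the exact formula of Theorem \ref{exact}, bounding each terminal value $m$ below by $3$ and then recognizing the remaining double sum of multinomial coefficients as $\binom{k_1+\cdots+k_r-2r}{k_1-2,\dots,k_r-2}$ via a path-counting argument. The paper does mention, in one sentence, the existence of an alternative proof by ``a simplified version of the double induction used to prove Theorem \ref{main},'' and your proposal is essentially a full execution of that sketch: you replace the coefficient $b_{k_1,k_2}$ by the constant $3$, reverse the inequalities, and supply the key Pascal-type identity for $B_r := 3\binom{K-2r}{k_1-2,\dots,k_r-2}$ that splits the expansion according to whether a decremented coordinate survives ($k_i\geq 4$) or collapses ($k_i=3$). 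All the steps check out: the outer base case reduces correctly to $a^2-ab+b^2\geq a+b$ for $a,b\geq 2$; the inner base case $M_{r+1}(3,\dots,3)=(r+1)M_r(3,\dots,3)\geq 3(r+1)!$ is right (and identifies where equality holds); and your identity is exactly Proposition \ref{multin} together with the standard convention that a multinomial coefficient with a zero entry equals the coefficient on the remaining entries, mirroring property (ii). Your route is self-contained and does not require Theorem \ref{exact}; the paper's route has the advantage of making transparent that the constant $3$ arises as the minimal terminal value $m$, which feeds directly into the asymptotic sharpness statement of Corollary \ref{mainasym3}. Either argument is acceptable, and yours is arguably the more elementary of the two since it avoids the combinatorial bookkeeping of the exact formula.
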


\noindent Combining Corollary \ref{lbcor} with an asymptotic upper bound on $M_r$ yielded by Theorem \ref{exact}, we establish the following asymptotic formula for $M_r(k)$. We use the notation $o_{r\to \infty}(1)$ to clarify that the asymptotic estimate is in the variable $r$, not $k$. 

\begin{corollary}\label{mainasym3} For integers $r\geq 2$ and $k\geq 3$, \begin{equation*}M_r(k)=3\frac{(r(k-2))!}{\left((k-2)!\right)^r}(1+o_{r\to \infty}(1)). \end{equation*}
\end{corollary}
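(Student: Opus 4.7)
The plan is to sandwich the ratio $M_r(k)/P_r$ between $3$ and $3+o_{r\to\infty}(1)$, where $P_r := \frac{(r(k-2))!}{((k-2)!)^r}$. The lower bound is immediate: Corollary \ref{lbcor} specialized to the diagonal case $(k_1,\dots,k_r)=(k,\dots,k)$ gives $M_r(k)\geq 3P_r$, so it suffices to establish the matching upper bound $M_r(k)\leq 3P_r(1+o_{r\to\infty}(1))$.

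For this, I would normalize each summand in the exact formula of Theorem \ref{exact} by $P_r$. Expanding the multinomial as
\[
\binom{r(k-2)-m+1}{\underbrace{k-2,\dots,k-2}_{r-2},k-3,k-m}=\frac{(r(k-2)-m+1)!}{((k-2)!)^{r-2}(k-3)!(k-m)!},
\]
dividing by $P_r$, and using the identities $\frac{((k-2)!)^r}{((k-2)!)^{r-2}(k-3)!}=(k-2)\cdot(k-2)!$ and $\frac{(r(k-2)-m+1)!}{(r(k-2))!}=\frac{1}{r(k-2)(r(k-2)-1)\cdots(r(k-2)-m+2)}$, the $m$-th term of $M_r(k)/P_r$ becomes
\[
\frac{r(r-1)\cdot m\cdot (k-2)\cdot\frac{(k-2)!}{(k-m)!}}{r(k-2)\bigl(r(k-2)-1\bigr)\cdots\bigl(r(k-2)-m+2\bigr)}.
\]

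Since $k$ is fixed, the numerator is $O(r^2)$ while the denominator is a product of $m-1$ terms each of order $r$, so each normalized summand is $O(r^{3-m})$. For $m=3$ the expression reduces to $\frac{3(k-2)^2\,r(r-1)}{r(k-2)(r(k-2)-1)}$, which tends to $3$ as $r\to\infty$; for each $m\geq 4$ the expression is $O(1/r)$ and therefore vanishes. Because the range $3\leq m\leq k$ is a fixed finite set of indices (independent of $r$), I can take the limit term-by-term in the finite sum, concluding $M_r(k)/P_r\to 3$, which is exactly the asserted asymptotic $M_r(k)=3P_r(1+o_{r\to\infty}(1))$.

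There is no substantive obstacle — the argument is essentially a careful bookkeeping exercise. The only point that benefits from attention is correctly performing the factorial cancellations when dividing the multinomial coefficient by $P_r$, and recognizing that the denominator $r(k-2)(r(k-2)-1)\cdots(r(k-2)-m+2)$ contributes exactly $m-1$ factors of order $r$, so that the competition between the $r(r-1)$ in the numerator of Theorem \ref{exact} and the falling factorial in the denominator isolates the $m=3$ term as the dominant one.
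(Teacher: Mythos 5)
Your proposal is correct and follows essentially the same route as the paper: the lower bound from Corollary \ref{lbcor}, and the upper bound by dividing the exact formula of Theorem \ref{exact} by $\frac{(r(k-2))!}{((k-2)!)^r}$ and observing that the $m=3$ term tends to $3$ while the terms with $m\geq 4$ vanish. The only difference is cosmetic: the paper bounds each normalized summand by $\frac{m}{(r-1)^{m-3}}$ and sums the resulting geometric-type series, which makes the $o_{r\to\infty}(1)$ uniform in $k$, whereas your term-by-term limit treats $k$ as fixed --- this suffices for the statement as written, and your own factorial bookkeeping (each of the $m-2$ trailing denominator factors is at least $(r-1)(k-2)$, while $\frac{(k-2)!}{(k-m)!}\leq (k-2)^{m-2}$) yields the uniform version with no extra work.
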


%\begin{rem} An easier argument analogous to the proof of Theorem \ref{main} shows that $$M_r(k_1,\dots,k_r)\geq 3{k_1+\cdots+k_r-2r \choose k_1-2,\dots,k_r-2}$$ provided $r,k_1,\dots,k_r\geq 2$ with $\max\{k_1,\dots,k_r\}\geq 3$, and in particular $M_r(k)\geq 3\frac{(r(k-2))!}{\left((k-2)!\right)^r}$ for $k\geq 3$. Therefore, while an improved closed form estimate for $M_r(k)$ may be possible, our diagonal estimate is within a factor of $4/3$ of optimal for all $k\geq 3$.  
%\end{rem} 

Implicit in the proof of Proposition \ref{r3} (which, as noted, is not new) is the formula $w_r(3)=(3-e_r)r!-1$, the difference between the formula in the conclusion of Proposition \ref{r3} and the main term $3r!$ yielded by \eqref{better} when $k=3$. We include a separate proof of this formula for $w_r(3)$ in Section \ref{wastesec} for completeness, and then establish the following general lower bound in the diagonal case. 

\begin{theorem}\label{wasterec} For integers $r\geq 2$ and $k\geq 4$,  \begin{align*}w_r(k)& \geq (r-2)r!\left((e_r-1)\frac{r!^{k-3}-1}{r!-1}-r!^{k-4}\right) \\ &+ \sum_{j=0}^{r-3}{r\choose j}{r(k-3)+j-1 \choose \underbrace{k-2,\dots,k-2}_{j},\underbrace{k-3,\dots,k-3,k-4}_{r-j}}(r-j) \left[r-j-2+w_{r-j}(3)\right]. \end{align*}  
\end{theorem}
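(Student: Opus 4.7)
My plan is to interpret $w_r(k, \ldots, k)$ as a weighted count over the fully expanded recursion tree rooted at $(k, \ldots, k)$. For each \emph{interior} node $v$ at color level $r'$ (that is, a state with $r'$ active coordinates all $\geq 3$, where the remaining $r - r'$ coordinates have already been collapsed to $2$ via property (ii) of Definition \ref{wastedef}), let $N(v)$ denote the number of paths from the root to $v$. Unfolding the recursion in (iii) gives
\begin{equation*}
w_r(k, \ldots, k) = \sum_{v \text{ interior}} (r' - 2) \, N(v),
\end{equation*}
and since every summand is non-negative, it suffices to extract two disjoint collections of interior-node contributions summing to the claimed right-hand side.

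For each $j \in \{0, 1, \ldots, r-3\}$, I would consider the $(r - j)$-color interior state of shape $(4, 3, \ldots, 3)$---one coordinate equal to $4$ and $r - j - 1$ coordinates equal to $3$---reached after $j$ specific coordinates have collapsed to $2$. There are $\binom{r}{j}(r-j)$ orderings specifying which $j$ coordinates collapsed and which of the remaining $r - j$ equals $4$, and the number of paths from $(k, \ldots, k)$ to each ordered state is the multinomial coefficient $\binom{r(k-3)+j-1}{\underbrace{k-2, \ldots, k-2}_{j}, \underbrace{k-3, \ldots, k-3}_{r-j-1}, k-4}$, counting interleavings of the prescribed coordinate-reductions in order. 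One application of the recursion gives $w_{r-j}(4, 3, \ldots, 3) = (r-j-2) + w_{r-j}(3) + (r-j-1)\, w_{r-j-1}(4, 3, \ldots, 3)$, so dropping the non-negative final term yields the subtree lower bound $(r-j-2) + w_{r-j}(3)$. Multiplying by $N(v)$ and summing over orderings and over $j$ produces the second term.

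For the first term, I would define an $(s, j)$-\emph{round-path}, for $0 \leq s \leq r - 1$ and $0 \leq j \leq k - 4$, as a path from $(k, \ldots, k)$ that first reduces $s$ distinct coordinates by $1$ each (in some order) and then completes $j$ full rounds, each round reducing every coordinate by $1$ in some order. A direct count gives $\frac{r!}{(r-s)!}\,(r!)^{j}$ such paths, and summing via $\sum_{s=0}^{r-1} r!/(r-s)! = r!(e_r - 1)$ together with a geometric series yields a total of $r!(e_r - 1)\,\frac{(r!)^{k-3}-1}{r!-1}$ round-paths. Each ends at an $r$-color interior node and contributes at least $(r-2)$ to the sum; the only overlap with the second term occurs at $(s, j) = (r-1, k-4)$, whose $(r!)^{k-3}$ round-paths end at the $r$-color $(4, 3, \ldots, 3)$-permutation states already captured by the $j = 0$ case above. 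Subtracting $(r - 2)(r!)^{k-3}$ removes this double count and yields the first term. The overall disjointness---both between the two terms and among the second-term summands---rests on the observation that each path from the root to any $(3, \ldots, 3)$-subtree at color level $r - j'$ is uniquely classified by the color level $r - j^\ast$, with $j^\ast \leq j'$, at which the final $4 \to 3$ reduction occurred, so the $w_{r-j}(3)$ contributions across different $j$ correspond to disjoint ``routes'' in the tree. Verifying this route-disjointness and confirming that no $r$-color round-path endpoint (other than the $(4, 3, \ldots, 3)$-permutations already removed) is inadvertently double-counted is the main technical obstacle.
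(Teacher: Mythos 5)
Your proposal is correct and takes essentially the same approach as the paper: the identical two-part decomposition into accumulation along round-paths (with the same $(r-2)(r!)^{k-3}$ overlap removal) plus the branch contributions at states of shape $(4,3,\dots,3)$ with $j$ collapsed coordinates, with matching counts throughout. The route-disjointness you flag as the main remaining obstacle does hold for exactly the reason you state---every node with all active coordinates at most $3$ lies in the $(3,\dots,3)$-subtree of a unique last ancestor of shape $(4,3,\dots,3)$, and every non-excluded round-path endpoint has either a coordinate at least $5$ or at least two coordinates equal to $4$, so it cannot coincide with any of those nodes.
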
 

\noindent The first upper bound in Theorem \ref{appetizer} from the introduction now follows from Theorem \ref{main} and the extraction of the $j=0$ term from the summation in Theorem \ref{wasterec}. Careful manipulation of the lower bound formula in Theorem \ref{wasterec}, and a healthy dose of calculus, gives the following asymptotic lower bound on $w_r(k)$, which notably has the same order of magnitude as $M_r(k)$. 

\begin{theorem}\label{wasteasym} For integers $r\geq 2$ and $k\geq 4$, 
\begin{equation*}w_r(k)\geq\left(\frac{3-e}{2}\right)\frac{(r(k-2))!}{\left((k-2)!\right)^r}\left(1-o_{r\to \infty}(1)\right).
\end{equation*} %and in particular \begin{equation*}\liminf_{r\to \infty} \frac{w_r(k)}{M_r(k)}\geq \frac{3-e}{6}. \end{equation*}
\end{theorem}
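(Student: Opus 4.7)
The plan is to apply the lower bound in Theorem \ref{wasterec}, discard its first (positive) summand---which by elementary Stirling is of order $r\cdot r!^{k-3}=o((r(k-2))!/((k-2)!)^r)$ and therefore irrelevant asymptotically---and focus on the main sum. Reindexing with $m=r-j$ makes $m$ run from $3$ to $r$. Substituting the exact formula $w_m(3)=(3-e_m)m!-1$ (from the proof of Proposition \ref{r3}) then yields
\[m\bigl[m-2+w_m(3)\bigr]=m\bigl[(3-e_m)m!+(m-3)\bigr]\geq \tfrac{3-e}{2}\,m\cdot m!\qquad(m\geq 3),\]
using $(3-e_m)\geq(3-e)>\tfrac{3-e}{2}$ together with $m-3\geq 0$.

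Next, I would extract the Gaussian decay in $m$ of the inner multinomial coefficient. Using $(k-2)!=(k-2)(k-3)!$ a short algebraic computation gives
\[\frac{\binom{r(k-3)+r-m-1}{(k-2)^{r-m},(k-3)^{m-1},k-4}}{(r(k-2))!/((k-2)!)^r}=\frac{(k-3)(k-2)^m}{\prod_{i=0}^{m}(r(k-2)-i)},\]
and combining with $\binom{r}{m}$ through standard Stirling-type expansions $\binom{r}{m}=\frac{r^m}{m!}\exp(-m(m-1)/(2r))(1+o(1))$ and $\prod_{i=0}^{m}(N-i)=N^{m+1}\exp(-m(m+1)/(2N))(1+o(1))$ with $N=r(k-2)$, valid uniformly in the range $m\leq r^{2/3}$ in which the Gaussian concentrates, one obtains
\[\binom{r}{m}\cdot\frac{(k-3)(k-2)^m}{\prod_{i=0}^{m}(r(k-2)-i)}=\frac{k-3}{r(k-2)\,m!}\,\exp\!\left(-\frac{\alpha m^2}{2r}\right)(1+o(1)),\]
with $\alpha=(k-3)/(k-2)$.

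Putting the two bounds together, each summand contributes at least $\frac{(k-3)(3-e)}{2r(k-2)}\,m\,e^{-\alpha m^2/(2r)}\cdot (r(k-2))!/((k-2)!)^r\cdot(1-o(1))$, and the final step is a Riemann-sum/integral approximation
\[\sum_{m=3}^{r}m\,e^{-\alpha m^2/(2r)}=\int_0^\infty m\,e^{-\alpha m^2/(2r)}\,dm\cdot(1+o(1))=\frac{r(k-2)}{k-3}(1+o(1)),\]
evaluated via the substitution $u=m\sqrt{\alpha/r}$ and $\int_0^\infty ue^{-u^2/2}\,du=1$. Multiplication produces the claimed lower bound $\left(\tfrac{3-e}{2}\right)(r(k-2))!/((k-2)!)^r\,(1-o(1))$. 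The main obstacle is upgrading the Stirling asymptotics to an inequality with $(1-o(1))$ multiplicative error uniformly in $m$; this is handled by an explicit Taylor remainder for $\log(1-x)$ and by cutting the sum at $m=r^{2/3}$ so that the Gaussian tail beyond that threshold is super-polynomially small and swallows any remaining Stirling error. The rest of the argument is calculus.
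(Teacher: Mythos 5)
Your proposal is correct, and at the strategic level it is the same proof as the paper's: start from Theorem \ref{wasterec}, discard the (nonnegative) accumulation term, insert $w_m(3)=(3-e_m)m!-1$, observe that after reindexing by $m=r-j$ the summand is a Gaussian profile in $m$ concentrated near $m\asymp\sqrt{r}$, and replace the sum by an integral. The technical execution differs, though, and the difference is worth recording because your version is quantitatively sharper. The paper applies Stirling's formula to the four factorials and then the Bernoulli inequality $\left(1-\frac{\delta}{k-2}\right)^{k-2}\geq 1-\delta$; that step throws away the second-order term and degrades the true profile $\exp\left(-\frac{\alpha m^2}{2r}\right)$, with $\alpha=\frac{k-3}{k-2}$, to the lower bound $\exp\left(-\frac{\alpha m^2}{r}\right)$, which is precisely where the paper's factor of $\frac{1}{2}$ (and hence the constant $\frac{3-e}{2}$) originates. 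Your direct expansion of the falling factorials $\binom{r}{m}$ and $\prod_{i=0}^{m}(r(k-2)-i)$ keeps the correct exponent, so $\sum_m m e^{-\alpha m^2/(2r)}\sim\frac{r(k-2)}{k-3}$ recovers the full Gaussian integral rather than half of it; the factor of $2$ in your write-up is inserted by hand when you weaken $(3-e_m)m!+(m-3)\geq(3-e)m!$ to $\frac{3-e}{2}m!$. If you simply keep $(3-e)$ there, your argument yields $w_r(k)\geq(3-e)\frac{(r(k-2))!}{\left((k-2)!\right)^r}(1-o_{r\to\infty}(1))$, which strengthens the theorem by a factor of two, is consistent with the exact $k=3$ formula $w_r(3)=(3-e)r!(1-o(1))$, and would improve Corollary \ref{asymcor} to $R_r(k)\leq e\cdot\frac{(r(k-2))!}{\left((k-2)!\right)^r}(1+o_{r\to\infty}(1))$. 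The one detail to tighten is the cutoff: at $m=r^{2/3}$ the cubic remainder $O(m^3/r^2)$ in your expansions is $O(1)$ rather than $o(1)$, so truncate at, say, $m\leq r^{3/5}$ instead; the discarded terms are positive and the lost Gaussian mass is superpolynomially small, so nothing else changes.
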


\noindent Finally, Theorems \ref{mainasym3} and \ref{wasteasym} combine with Proposition \ref{diff} to yield the following asymptotic upper bound for diagonal multicolor Ramsey numbers. 

\begin{corollary}\label{asymcor} For integers $r\geq 2$ and $k\geq 4$, \begin{equation*} R_r(k)\leq \left(\frac{3+e}{2}\right)\frac{(r(k-2))!}{\left((k-2)!\right)^r}\left(1+o_{r\to \infty}(1)\right).\end{equation*}
\end{corollary}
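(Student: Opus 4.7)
The plan is to deduce Corollary \ref{asymcor} directly, and essentially in one line, by combining the three results immediately preceding it in the paper: Proposition \ref{diff}, Corollary \ref{mainasym3}, and Theorem \ref{wasteasym}. All of the technical asymptotic work has been done in those statements, so what remains is a clean arithmetic assembly; the only care required is in tracking the $o_{r\to\infty}(1)$ terms correctly.

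First I would fix integers $r\geq 2$ and $k\geq 4$, and apply Proposition \ref{diff} to obtain the inequality $R_r(k)\leq M_r(k)-w_r(k)$. Next I would substitute the asymptotic upper bound on $M_r(k)$ from Corollary \ref{mainasym3} and the asymptotic lower bound on $w_r(k)$ from Theorem \ref{wasteasym}, noting that both have the same leading factor $\frac{(r(k-2))!}{((k-2)!)^r}$, which I would pull out. After this factoring, the constant in the bracket is
\[
3 - \frac{3-e}{2} = \frac{3+e}{2},
\]
which is precisely the desired constant.

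The final bookkeeping step is to combine the two separate $o_{r\to\infty}(1)$ terms, one of each sign, into a single $o_{r\to\infty}(1)$ in a way that preserves the direction of the inequality. Concretely, since $3\cdot o_{r\to\infty}(1)$ from the $M_r$ bound and $\frac{3-e}{2}\cdot o_{r\to\infty}(1)$ from the $w_r$ bound both tend to $0$ as $r\to\infty$, their sum can be absorbed into a single $o_{r\to\infty}(1)$ multiplying $\frac{3+e}{2}$, yielding the stated bound.

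There is no genuine obstacle here, since the hard analysis is already contained in Corollary \ref{mainasym3} and Theorem \ref{wasteasym}; the only thing to watch is that the lower-bound statement for $w_r$ is used to \emph{subtract} an asymptotically nontrivial quantity from the upper bound for $M_r$, and that the $\left(1-o_{r\to\infty}(1)\right)$ factor in Theorem \ref{wasteasym} points in the right direction (it weakens the subtraction, which is consistent with an upper bound on $R_r(k)$). This confirms the claimed constant $\frac{3+e}{2}$ is the sharpest constant achievable by this route.
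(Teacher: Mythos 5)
Your proposal is correct and is exactly the argument the paper intends: the corollary is stated as an immediate consequence of Proposition \ref{diff}, Corollary \ref{mainasym3}, and Theorem \ref{wasteasym}, with the constant arising from $3-\frac{3-e}{2}=\frac{3+e}{2}$ and the two error terms absorbed into a single $o_{r\to\infty}(1)$, just as you describe. No gaps.
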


We have asserted throughout the paper that we seek explicit upper bounds on multicolor Ramsey numbers that capture the strength of the pigeonhole principle ``alone". However, as will be seen in the following two sections, some of our proofs use other tools, primarily calculus. To clarify our meaning, we note that, besides the basic properties $R_1(k)=k$, Proposition \ref{ignore2s}, and invariance under coordinate permutation, the recursive inequality in Proposition \ref{recurs} is the only thing we are using \textit{about Ramsey numbers}. Our efforts in Section \ref{mtsec} and \ref{wastesec} address the question of what explicit upper bounds must hold for any function satisfying that list of properties, and toward that end we incorporate a wider variety of techniques.  

%for $n\in \N$, $w_r(n)=w_r(n,\dots,n)$ satisfies \begin{equation*}w_r(n)\geq (3-e_r)r!^n+(r-2)r!\left(\frac{r!^{n-1}-1}{r!-1} \right)-r!^{n-1}, \end{equation*} where $e_r=\sum_{j=0}^r 1/j!<e$. 

\section{The main term} \label{mtsec}

In this section we verify all of our claims concerning $M_r$, namely Theorems \ref{main} and \ref{exact} and Corollaries \ref{lbcor} and \ref{mainasym3}. We begin with some preliminary observations related to the fraction multiplied by the multinomial coefficient in the conclusion of Theorem \ref{main}, which will be crucial in the proof of that theorem.

\begin{lemma} \label{alglemma} For $x,y>0$, let $f(x,y)=\frac{(x+y)(x+y-1)}{xy}$. Then, for $x,y>1$, $f(x-1,y)\leq f(x,y)$ if and only if $x \geq y$. Further, for integers $r,n_1,n_2\geq 2$ and $n_3,\dots,n_r\geq 0$, \begin{align*}&f(n_1-1,n_2){n_1+\cdots +n_r-3 \choose n_1-2,n_2-1,n_3\dots,n_r}+f(n_1,n_2-1){n_1+\cdots+n_r-3 \choose n_1-1,n_2-2,n_3,\dots,n_r} \\ =& f(n_1,n_2)\left( {n_1+\cdots+n_r-3 \choose n_1-2,n_2-1,n_3\dots,n_r}+{n_1+\cdots+n_r-3 \choose n_1-1,n_2-2,n_3,\dots,n_r}\right).\end{align*}
\end{lemma}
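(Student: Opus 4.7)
The lemma has two essentially separate parts, and both are algebraic identities that should fall to direct computation once one sets up the right common factor. The plan is to handle the monotonicity claim first, then the weighted-sum identity, in each case reducing to a one-line verification.

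For the first claim, the cleanest approach is to study the ratio $f(x,y)/f(x-1,y)$ rather than the difference, since $f$ is a ratio of products and common factors will cancel. Writing $s=x+y$, we have
\begin{equation*}
\frac{f(x,y)}{f(x-1,y)}=\frac{s(s-1)}{xy}\cdot\frac{(x-1)y}{(s-1)(s-2)}=\frac{s(x-1)}{x(s-2)}.
\end{equation*}
Since $f(x-1,y)>0$ when $x>1$ and $y>0$, the inequality $f(x-1,y)\leq f(x,y)$ is equivalent to this ratio being at least $1$, which after clearing denominators reduces to $2x\geq s$, i.e. $x\geq y$. Both directions of the biconditional fall out at once.

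For the second claim, I would expand all the multinomial coefficients using Proposition \ref{multin} and pull out a common factor $N!/(n_3!\cdots n_r!)$, where $N=n_1+\cdots+n_r-3$, so that only the $n_1,n_2$ pieces remain. After substituting the definition of $f$, the left-hand side consolidates into
\begin{equation*}
\frac{(n_1+n_2-1)(n_1+n_2-2)\,N!}{n_3!\cdots n_r!}\left[\frac{1}{(n_1-1)!\,n_2!}+\frac{1}{n_1!\,(n_2-1)!}\right],
\end{equation*}
and the bracket simplifies by the usual Pascal-type step to $(n_1+n_2)/(n_1!\,n_2!)$. On the right-hand side, the same Pascal-type identity applied to the bracketed sum of multinomial coefficients produces the factor $(n_1+n_2-2)/((n_1-1)!(n_2-1)!)$, which after multiplication by $f(n_1,n_2)=(n_1+n_2)(n_1+n_2-1)/(n_1n_2)$ yields exactly the same expression. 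Equating the two confirms the identity.

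Neither step involves a genuine obstacle: both are straightforward manipulations of rational expressions, and the only real care required is keeping track of the factorials so that the simplified forms of the two sides are manifestly equal. The mild conceptual point worth emphasizing is that the identity is precisely the algebraic fact that makes it possible to distribute the weight $f(n_1,n_2)$ across a Pascal-style recursion in the main induction of Theorem \ref{main}; this is the reason $f$ is defined with numerator $(x+y)(x+y-1)$ rather than some other quadratic.
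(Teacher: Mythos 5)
Your proposal is correct and follows essentially the same route as the paper: both parts reduce to direct computation with the factorial formula for multinomial coefficients, and your ratio formulation of the monotonicity claim is just a repackaging of the paper's cross-multiplication. The only cosmetic difference is that you simplify each side of the identity to a common closed form $\frac{(n_1+n_2)(n_1+n_2-1)(n_1+n_2-2)\,N!}{n_1!\,n_2!\,n_3!\cdots n_r!}$, whereas the paper divides out a common factor first and verifies the resulting two-variable identity; both are sound.
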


\begin{proof} For the first claim, we consider $x,y>1$ and see 
\begin{align*} f(x-1,y)\leq f(x,y) &\iff \frac{(x+y-1)(x+y-2)}{(x-1)y}  \leq \frac{(x+y)(x+y-1)}{xy} \\  & \iff x(x+y-2)  \le (x-1)(x+y) \\
& \iff x^2 + xy - 2x \leq x^2 + xy - x - y \\ &\iff x\geq y.
\end{align*} 

\noindent For the second claim, fix integers $r,n_1,\dots,n_r\geq 2$. We first apply the formula from Proposition \ref{multin}, and the two sides of the desired equality are \begin{equation*}f(n_1-1,n_2)\frac{(n_1+\cdots+n_r-3)!}{(n_1-2)!(n_2-1)!n_3!\cdots n_r!}+f(n_1,n_2-1)\frac{(n_1+\cdots+n_r-3)!}{(n_1-1)!(n_2-2)!n_3!\cdots n_r!} \end{equation*} and \begin{equation*}f(n_1,n_2)\left(\frac{(n_1+\cdots+n_r-3)!}{(n_1-2)!(n_2-1)!n_3!\cdots n_r!}+\frac{(n_1+\cdots+n_r-3)!}{(n_1-1)!(n_2-2)!n_3!\cdots n_r!}, \right) \end{equation*} respectively. Dividing both expressions by the common factor $$\frac{(n_1+\cdots+n_r-3)!}{(n_1-2)!(n_2-2)!n_3!\cdots n_r!}, $$ we have that the desired inequality holds if and only if \begin{equation*}\frac{f(n_1-1,n_2)}{n_2-1}+\frac{f(n_1,n_2-1)}{n_1-1}=f(n_1,n_2)\left(\frac{1}{n_2-1}+\frac{1}{n_1-1}\right), \end{equation*} or in other words \begin{equation*} \frac{(n_1+n_2-1)(n_1+n_2-2)}{(n_1-1)n_2(n_2-1)}+\frac{(n_1+n_2-1)(n_1+n_2-2)}{n_1(n_2-1)(n_1-1)}=\frac{(n_1+n_2)(n_1+n_2-1)}{n_1n_2}\left(\frac{1}{n_2-1}+\frac{1}{n_1-1}\right). \end{equation*} Dividing both sides by $(n_1+n_2-1)$ and multiplying both sides by $n_1n_2(n_1-1)(n_2-1)$,  this equality is equivalent to \begin{align*}(n_1+n_2-2)n_1+(n_1+n_2-2)n_2=(n_1+n_2)(n_1-1+n_2-1).
\end{align*} Finally, we see that both sides yield $(n_1+n_2)(n_1+n_2-2)$, and the identity is verified. \end{proof}

We now have the necessary facts to complete the double induction proof of our convenient, explicit upper bound on multicolor Ramsey numbers.

\begin{proof}[Proof of Theorem \ref{main}]  For integers $r,k_1,\dots,k_2\geq 2$, we let $b_{k_1,k_2}=f(k_1-1,k_2-1)$, where $f$ is as in Lemma \ref{alglemma}, and we let $C_r(k_1,\dots,k_r)={k_1+\cdots+k_r-2r \choose k_1-2,\dots, k_r-2}$. With this notation, we must show that \begin{equation}\label{goal} M_r(k_1,\dots,k_r)\leq b_{k_1,k_2}C_r(k_1,\dots,k_r) \end{equation} for all integers $r\geq 2$ and $k_1\geq k_2\geq \dots \geq k_r\geq 2$. We note that, by definition of $C_r$ and by Proposition \ref{multin}, $C_r$ satisfies the same properties as $M_r$ listed in (ii)-(iv) in Definition \ref{wasterec}, while $b_{k_1,k_2}$ is invariant under transposition of the first two coordinates but not  the full list of $r$ coordinates when $r\geq 3$. 

\noindent We establish \eqref{goal} with a double induction structured identically to that of the proof of Proposition \ref{diff}. For the outer base case $r=2$, we see for $k_1, k_2 \geq 2$ that \begin{align*}b_{k_1,k_2}C_2(k_1,k_2)&=\frac{(k_1+k_2-2)(k_1+k_2-3)}{(k_1-1)(k_2-1)}{k_1+k_2-4 \choose k_1-2,k_2-2} \\ &= \frac{(k_1+k_2-2)(k_1+k_2-3)}{(k_1-1)(k_2-1)}\frac{(k_1+k_2-4)!}{(k_1-2)!(k_2-2)!} \\ &=\frac{(k_1+k_2-2)!}{(k_1-1)!(k_2-1)!}\\ &= {k_1+k_2-2 \choose k_1-1}\\&= M_2(k_1,k_2).\end{align*} In fact, the fraction $b_{k_1,k_2}$ was chosen specifically to arrange this identity, and the outer base case is established. Now suppose \eqref{goal} holds for a particular $r\geq 2$ and all $k_1 \geq k_2\geq \cdots\geq k_r\geq 2$. By property (ii), we also know that \eqref{goal} holds for $k_1\geq \cdots \geq k_r\geq k_{r+1}=2$, as this reduces back to the case of $r$ coordinates. In particular, this establishes the inner base case of $k_1=\cdots=k_{r+1}=2$, and allows us to assume moving forward that $k_{r+1}\geq 3$. We now assume \eqref{goal} holds for all $k_1\geq \cdots \geq k_{r+1}\geq 2$ adding to a particular $s\geq 2(r+1)$, and we fix $k_1\geq \cdots \geq k_{r+1} \geq 3$ with $k_1+\cdots+k_{r+1}=s+1$.

\noindent We complete the inner induction step by analyzing three cases:

\noindent \textbf{Case 1:} $k_1=k_2$.

\noindent Suppose $k_1=k_2=k$. If $k_3$ is also $k$, when applying property (iii) and subtracting from the first two coordinates, we technically must permute coordinates to put them back into decreasing order to apply the inductive hypothesis, so the resulting first two coordinates may both be $k$ or may be one $k$ and one $k-1$. However, Lemma \ref{alglemma} comes to the rescue in this case, as it assures $b_{k,k}=b_{k-1,k}$, so $b_{k,k}$ is the correct coefficient on every term. Therefore, regardless of whether $k_3$ is equal to versus less than, $k$, we have by property (iii) and the inductive hypothesis that 

\begin{align*}M_{r+1}(k,k,k_3,\dots,k_{r+1})&=M_{r+1}(k-1,k,\dots,k_{r+1})+\cdots +M_{r+1}(k,k,\dots,k_{r+1}-1) \\ &\leq b_{k,k}\left(C_{r+1}(k-1,k,\dots,k_{r+1})+\cdots +C_{r+1}(k,k,\dots,k_{r+1}-1)\right)\\ & =b_{k,k}C_{r+1}(k,k,k_3,\dots,k_{r+1}).
\end{align*}  

\noindent \textbf{Case 2:} $k_1>k_2=k_3$.

\noindent Suppose $k_1>k_2=k_3=k$. Lemma \ref{alglemma} assures that $b_{k_1-1,k}<b_{k_1,k}$, and when applying property (iii) and subtracting from the second coordinate, the third coordinate must be moved to the second spot in order to apply the inductive hypothesis, so the correct coefficient on the second term is $b_{k_1,k}$. In other words, by property (iii) and inductive hypothesis, we have 

\begin{align*}M_{r+1}(k_1,k,k,k_4,\dots,k_{r+1})=&M_{r+1}(k_1-1,k,k,\dots,k_{r+1})+M_{r+1}(k_1,k-1,k\dots, k_{r+1})+\cdots \\ &+M_{r+1}(k_1,\dots,k_{r+1}-1) \\ \leq & b_{k_1-1,k}C_{r+1}(k_1-1,k,\dots,k_{r+1})\\ &+b_{k_1,k}\left(C_{r+1}(k_1,k-1,k,\dots,k_{r+1})+\cdots +C_{r+1}(k_1,\dots,k_{r+1}-1)\right)\\ < &b_{k_1,k}\left(C_{r+1}(k_1-1,k,\dots,k_{r+1})+\cdots+C_{r+1}(k_1,\dots,k_{r+1}-1) \right) \\ =& b_{k_1,k}C_{r+1}(k_1,k,k,k_4,\dots,k_{r+1}).
\end{align*} 

\noindent \textbf{Case 3:} $k_1>k_2>k_3$.

\noindent Suppose $k_1>k_2>k_3$. By the second conclusion of Lemma \ref{alglemma},  we have \begin{align*}& b_{k_1-1,k_2}C_r(k_1-1,k_2,\dots,k_r)+b_{k_1,k_2-1}C_r(k_1,k_2-1,\dots,k_r)\\=&b_{k_1,k_2}\left(C_r(k_1-1,k_2,\dots,k_r)+C_r(k_1,k_2-1,\dots,k_r) \right),  \end{align*}
and hence, applying property (iii) and the inductive hypothesis a final time, we have 
\begin{align*}M_{r+1}(k_1,\dots,k_{r+1})= &M_{r+1}(k_1-1,\dots,k_{r+1})+\cdots+M_{r+1}(k_1,\dots,k_{r+1}-1) \\ \leq & b_{k_1-1,k_2}C_{r+1}(k_1-1,\dots,k_{r+1}+b_{k_1,k_2-1}C_{r+1}(k_1,k_2-1,\dots,k_{r+1}) \\ &+b_{k_1,k_2}\left(C_{r+1}(k_1,k_2,k_3-1,\dots,k_{r+1})+\cdots+C_{r+1}(k_1,\dots,k_{r+1}-1) \right) \\ =& b_{k_1,k_2}\left(C_{r+1}(k_1-1,k,\dots,k_{r+1})+\cdots+C_{r+1}(k_1,\dots,k_{r+1}-1) \right) \\ =&b_{k_1,k_2}C_{r+1}(k_1,\dots,k_{r+1}),
\end{align*}
which completes the proof.
\end{proof}  

We now establish our less convenient, but exact, explicit formula for $M_r$. 

\begin{proof}[Proof of Theorem \ref{exact}] Fix integers $r\geq 2$ and $k_1,\dots,k_r\geq 3$. To compute an exact value of $M_r(k_1,\dots,k_r)$, one can repeatedly apply properties (ii) and (iii) from Definition \ref{wastedef} until reaching terms of the form $M_r(2,\dots,m,\dots,2)=m$ for some $3\leq m \leq k_j$, where the $j$-th coordinate is $m$ and all others are $2$, and then adding together all of the resulting values of $m$, with multiplicity. 

\noindent To count the number of occurrences of each terminating value, we count, for each ordered pair $(i,j)$ with $1\leq i,j\leq r$ and $i\neq j$, and each  $3\leq m \leq k_j$, the exact number of appearances of the term $M_r(2,\dots,3,\dots,m,\dots,2)$, where the $i$-th coordinate is $3$, the $j$-th coordinate is $m$, and all other coordinates are $2$. This count is relevant because every terminating value of $m$ descends from a unique term of this form, since the process would already terminate at the only other possible ``parent" $(2,\dots,m+1,\dots,2)$.  The number of applications of property (iii) required to reach such a term is $$(k_1-2)+\cdots+(k_i-3)+\cdots+(k_j-m)+\cdots+(k_r-2)=k_1+\cdots+k_r-2r-m+1,$$ and the summands on the left hand side above indicate exactly how many of those steps must be allocated to each coordinate. In other words, the desired count is precisely the multinomial coefficient ${k_1+\cdots+k_r-2r-m+1 \choose k_1-2,\dots,k_i-3,\dots,k_j-m,\dots,k_r-2}$. Then, for each occurrence of $M_r(2,\dots,3,\dots,m,\dots,2)$, we apply property (iii) a final time (ignoring all of the $2$'s as directed by property (ii)) to yield $$M_r(2,\dots,3,\dots,m,\dots,2)=m+M_r(2,\dots,3,\dots,m-1,\dots,2).$$ If $m>3$, the latter term is accounted for elsewhere, so we use this occurrence of $M_r(2,\dots,3,\dots,m,\dots,2)$ exclusively to account for this one terminating value of $m$. If $m=3$, then both terms on the right hand side above are terminating values of $3$, so the $3$ should be counted twice. Fortunately, our claimed formula has this double counting built in, as the two relevant coordinates can be chosen as $i$ and $j$ in either order.  Putting everything together, we have \begin{equation*}M_r(k_1,\dots,k_r)=\sum_{\substack{1\leq i,j\leq r \\ i\neq j}} \sum_{m=3}^{k_j} m {k_1+\cdots+k_r-2r-m+1 \choose k_1-2,\dots,k_i-3,\dots,k_j-m,\dots, k_r-2}, \end{equation*} as claimed. Finally, the simplified formula in the diagonal case comes from the fact that there are $r(r-1)$ ordered pairs $(i,j)$ with $1\leq i,j\leq r$ and $i\neq j$, and the multinomial coefficient is invariant under coordinate permutation. 
\end{proof}

Using the exact formula from Theorem \ref{exact}, we provide a convenient, explicit lower bound for $M_r$, which in the subsequent proof we show is asymptotically sharp. 

\begin{proof}[Proof of Corollary \ref{lbcor}] Fix integers $r\geq 2$ and $k_1,\dots,k_r\geq 3$. The desired lower bound can be seen from previous components of this paper in two ways: a simplified version of the double induction used to prove Theorem \ref{main} (where upper bounds are replaced by lower bounds and $b_{k_1,k_2}$ is replaced by the constant $3$), or an application of Theorem \ref{exact} which gives \begin{align*}M_r(k_1,\dots,k_r) &= \sum_{\substack{1\leq i,j\leq r \\ i\neq j}} \sum_{m=3}^{k_j} m {k_1+\cdots+k_r-2r-m+1 \choose k_1-2,\dots,k_i-3,\dots,k_j-m,\dots, k_r-2} \\ &\geq 3\sum_{\substack{1\leq i,j\leq r \\ i\neq j}} \sum_{m=3}^{k_j} {k_1+\cdots+k_r-2r-m+1 \choose k_1-2,\dots,k_i-3,\dots,k_j-m,\dots, k_r-2} \\ &=3{k_1+\cdots+k_r-2r \choose k_1-2,\dots,k_r-2}. \end{align*} The finally equality follows from the fact that every path (reducing single coordinates by $1$ at a time) from $(k_1,\dots,k_r)$ to $(2,\dots,2)$ passes through a unique point of the form $(2,\dots,3,\dots,m,\dots,2)$, with $3$ in coordinate $i$ and $m$ in coordinate $j$ and the remaining coordinates $2$, for some $3\leq m \leq k_j$. If $m>3$, the remainder of the path is determined, while if $m=3$ there are two choices, which as in the proof of Theorem \ref{main} is accounted for by the fact that the two relevant coordinates can be chosen as $i$ and $j$ in either order. 
\end{proof} 

\begin{proof}[Proof of Corollary \ref{mainasym3}] Fix integers $r\geq 2$ and $k\geq 3$. By Corollary \ref{lbcor}, it suffices for us to establish the desired asymptotic upper bound. By Theorem \ref{exact}, we have \begin{align*}M_r(k)&=r(r-1)\sum_{m=3}^k m {r(k-2)-m+1 \choose \underbrace{k-2,\dots, k-2}_{r-2},k-3,k-m} \\ &=r(r-1)\sum_{m=3}^k m\frac{(r(k-2)-m+1)!}{\left((k-2)!\right)^{r-2}(k-3)!(k-m)!} \\ &=\frac{(r(k-2))!}{\left((k-2)!\right)^r} \sum_{m=3}^k m(k-2) \frac{(k-2)!}{(k-m)!}\frac{r(r-1)}{r(k-2)(r(k-2)-1)\cdots (r(k-2)-m+2)} \\ &=\frac{(r(k-2))!}{\left((k-2)!\right)^r} \sum_{m=3}^k m \frac{(k-2)!}{(k-m)!}\frac{r-1}{(r(k-2)-1)\cdots (r(k-2)-m+2)}.\end{align*} We note that the rightmost denominator above consists of $m-2$ terms, each of which is at least $(r-1)(k-2)$, and the ratio $(k-2)!/(k-m)!$ is a product of $m-2$ terms, each of which is at most $k-2$, hence \begin{equation*}M_r(k)\leq \frac{(r(k-2))!}{\left((k-2)!\right)^r} \sum_{m=3}^k m\frac{(r-1)(k-2)^{m-2} }{((r-1)(k-2))^{m-2}}=\frac{(r(k-2))!}{\left((k-2)!\right)^r} \sum_{m=3}^k \frac{m}{(r-1)^{m-3}}. \end{equation*} Finally, the resulting summation satisfies \begin{align*}\sum_{m=3}^k \frac{m}{(r-1)^{m-3}}&=3+\sum_{j=1}^{k-3}\frac{3}{(r-1)^j}+\sum_{m=1}^{k-3}\frac{m}{(r-1)^m} \\ &\leq 3+\sum_{j=1}^{\infty}\frac{3}{(r-1)^j}+\sum_{m=1}^{\infty}\frac{m}{(r-1)^m} \\ &=3+\frac{3}{r-1}+\frac{r-1}{(r-2)^2}, \end{align*} where the infinite series are evaluated using the geometric series and its derivative. Since the resulting quantity is $3(1+o_{r\to\infty}(1))$, the asymptotic formula for $M_r(k)$ is established.
\end{proof}

\section{The waste function} \label{wastesec}

In this section, we verify our claims about the function $w_r$, namely Theorems \ref{wasterec} and \ref{wasteasym}. We begin with a separate proof of an exact formula for $w_r(3)$, implicit in Proposition \ref{r3}.

\begin{proposition}\label{waste3} For an integer $r\geq 2$,  \begin{equation*}w_r(3)=(3-e_r)r!-1. \end{equation*} 
\end{proposition}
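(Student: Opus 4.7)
The plan is a straightforward induction on $r$, with the main work being to derive a clean one-variable recurrence for $w_r(3)$ from the definition of $w_r$.

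First, I would establish the recurrence $w_r(3) = r \, w_{r-1}(3) + (r-2)$ for $r \geq 3$. Applying property (iii) of Definition \ref{wastedef} to the all-$3$ input, together with coordinate-permutation invariance (property (iv)), gives
\begin{equation*}
w_r(3) = w_r(3,3,\dots,3) = \sum_{i=1}^{r} w_r(3,\dots,3,\underbrace{2}_{i\text{-th}},3,\dots,3) + (r-2).
\end{equation*}
Each of the $r$ summands has a coordinate equal to $2$, so property (ii) (``ignore $2$'s'') collapses each one to $w_{r-1}(3,\dots,3) = w_{r-1}(3)$, producing the desired recurrence.

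Next, the base case $r=2$: property (i) gives $w_2(3,3) = 0$, and the claimed formula reads $(3-e_2)\,2! - 1 = (3 - 5/2)(2) - 1 = 0$. For the inductive step, assume $w_{r-1}(3) = (3 - e_{r-1})(r-1)! - 1$. Substituting into the recurrence,
\begin{align*}
w_r(3) &= r\bigl[(3 - e_{r-1})(r-1)! - 1\bigr] + (r-2) \\
&= (3 - e_{r-1})\, r! - 2.
\end{align*}
Using $e_r - e_{r-1} = 1/r!$, which is just the definition of the partial sum, we rewrite $(3 - e_{r-1})\, r! = (3 - e_r)\, r! + 1$, so $w_r(3) = (3 - e_r)\, r! - 1$, completing the induction.

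There is no real obstacle here; the only thing to be careful about is justifying the reduction $w_r(3,\dots,2,\dots,3) = w_{r-1}(3)$ via property (ii), which is exactly what was set up in Definition \ref{wastedef} and used in the proof of Proposition \ref{diff}. The identity $e_r - e_{r-1} = 1/r!$ is what makes the arithmetic collapse cleanly, mirroring the analogous step in the proof of Proposition \ref{r3}.
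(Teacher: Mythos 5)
Your proof is correct and follows essentially the same route as the paper: derive the recurrence $w_r(3)=r\,w_{r-1}(3)+(r-2)$ from properties (ii)--(iv) of Definition \ref{wastedef}, check the base case $r=2$, and close the induction using $e_r-e_{r-1}=1/r!$. All the arithmetic checks out.
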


\begin{proof}  We induct on $r$. When $r=2$, both sides of the formula are $0$, which establishes our base case. Now suppose the formula holds for a particular $r\geq 2$. By properties (ii), (iii), and (iv) in Definition \ref{wastedef}. we have 
\begin{align*} w_{r+1}(3)&=w_{r+1}(2,3,\dots,3)+w_{r+1}(3,2,3,\dots,3)+\cdots+w_{r+1}(2,3,\dots,3)+(r-2) \\ &= (r+1)w_{r}(3)+(r-1) \\ &=(r+1)[(3-e_r)r!-1]+(r-1) \\ &=(3-e_r)(r+1)!-1-1 \\ &=\left(3-e_r-\frac{1}{(r+1)!}\right)(r+1)!-1\\ &=(3-e_{r+1})(r+1)!-1,
\end{align*} 
as required.\end{proof}

%\begin{proposition} For integers $r\geq 2$ and $n\geq1$, we have \begin{equation*}w_r(n)\geq (3-e_r)r!^n+(r-2)r!\left(\frac{r!^{n-1}-1}{r!-1} \right)-r!^{n-1}. \end{equation*}  

%\end{proposition} 

We now establish an explicit lower bound for $w_r(k)$, proven by reducing to the case of $k=3$.

\begin{proof}[Proof of Theorem \ref{wasterec}] Fix integers $r\geq 2$ and $k\geq 4$. To estimate $w_r(k)$, we repeatedly apply property (iii) in Definition \ref{wasterec}. The first descension is straightforward, as \begin{align*}w_r(k)&=w_r(k,\dots,k) \\ &=w_r(k-1,k,\dots,k)+\cdots+w_r(k,\dots,k,k-1) + (r-2) \\ &=rw_r(k-1,k,\dots,k)+(r-2), \end{align*} where the last equality is property (iv). However, things quickly become more complicated. Once any coordinate is reduced to $2$, then in order to apply property (iii) again we must first apply property (ii) and eliminate that coordinate. Further, some of the newly introduced additive contributions in the next level will be $(r-3)$ instead of $(r-2)$, since we have reduced to $r-1$ coordinates. After each descension, we have a new combination of values of $w_{r-j}$ for some $j\geq 0$, which we informally call \textit{branches}, and some additive contributions of the form $(r-2-j)$ for some $j\geq 0$, which we informally refer to as \textit{accumulation}. To avoid ambiguity and double counting, and to take advantage of the exact formula yielded by Proposition \ref{waste3}, we primarily consider branches of the form $w_{r-j}(4,3,\dots,3)$ for $0\leq j \leq r-3$, specifically their contributions that do \textit{not} descend to branches of the form $w_{r-j-1}(4,3,\dots,3)$. We denote such contributions by $[w_r(k)]_j$.

\noindent In order to determine $[w_r(k)]_j$, we must count the number of relevant branches. Fixing $0\leq j \leq r-3$, we have ${r\choose j}$ choices of which coordinates to deplete all the way down to $2$, and hence eliminate with property (ii). To descend from $w_r(k)$ to $w_{r-j}(4,3,\dots,3)$ requires a total of $j(k-2)+(r-j)(k-3)-1=r(k-3)+j-1$ steps, of which $k-2$ must be assigned to each of the $j$ chosen coordinates. Once those choices are made, the remaining $(r-j)(k-3)-1$ steps must be distributed amongst the remaining $r-j$ coordinates, with $k-4$ steps assigned to one coordinate, for which there are $r-j$ choices, and $k-3$ steps assigned to all others, for a total of $$(r-j){r(k-3)+j-1 \choose \underbrace{k-2,\dots,k-2}_{j},\underbrace{k-3,\dots,k-3,k-4}_{r-j}}$$ possibilities. An application of properties (ii)-(iv) gives $$w_{r-j}(4,3,\dots,3)=w_{r-j}(3)+(r-j-1)w_{r-j-1}(4,3,\dots,3)+(r-2-j),$$ but the $w_{r-j-1}(4,3,\dots,3)$ branches will be accounted for in $[w_r(k)]_{j+1}$, so we have \begin{equation}\label{wrkj}[w_r(k)]_j={r\choose j}(r-j){r(k-3)+j-1 \choose \underbrace{k-2,\dots,k-2}_{j},\underbrace{k-3,\dots,k-3,k-4}_{r-j}}(r-j-2+w_{r-j}(3)). \end{equation}
 
\noindent We finish by noting that, so far, we have ignored all accumulation prior to branches of the form $w_{r-j}(4,3,\dots,3)$. Robustly accounting for such accumulation without double counting has proven difficult, but we can at least consider the family of branches that descend from $w_r(k)$ to $w_r(4,3,\dots,3)$ while also passing through $w_r(\ell)$ for each $4\leq \ell \leq k-1$. Specifically, \begin{align*}w_r(k)&=rw_r(k-1,k,\dots,k)+(r-2) \\ &\geq r(r-1)w_r(k-1,k-1,k,\dots,k)+(r-2)(1+r) \\ &\geq r(r-1)(r-2)w_r(k-1,k-1,k-1,\dots,k)+(r-2)(1+r+r(r-1)) \\ &\dots \\ &\geq r!w_r(k-1)+(r-2)r!(e_r-1) \\ &\dots \\ &\geq r!^{k-4}w_r(4)+(r-2)(e_r-1)r!(1+r!+r!^2+\cdots+r!^{k-5}) \\ &\dots \\ &\geq r!^{k-3}w_r(4,3,\dots,3)+(r-2)r!\left((e_r-1)(1+r!+r!^2+\cdots+r!^{k-5})+(e_r-2)r!^{k-4}\right).  \end{align*} 
While the resulting $w_r(4,3\dots,3)$ branches have already been accounted for in $[w_r(k)]_0$, the accumulation term on the right hand side has not. Simplifying this accumulation term and adding it to the previously considered contributions, we have  $$w_r(k)\geq (r-2)r!\left((e_r-1)\frac{r!^{k-3}-1}{r!-1}-r!^{k-4}\right)+\sum_{j=0}^{r-3}[w_r(k)]_j,$$ and the theorem now follows from \eqref{wrkj}.  
\end{proof} 

We now use Theorem \ref{wasterec} and Proposition \ref{waste3} to establish an asymptotic lower bound for $w_r(k)$ that has the same order of magnitude as $M_r(k)$. 

\begin{proof}[Proof of Theorem \ref{wasteasym}] Fix integers $r\geq 2$ and $k\geq 4$. Throughout the proof, $o(1)$ refers to decay as $r$ tends to infinity as opposed to $k$. By Theorem \ref{wasterec} and Proposition \ref{waste3}, ignoring the initial accumulation term prior to the summation and the $r-j-3\geq 0$ term within the summation, we have \begin{align*}w_r(k)&\geq \sum_{j=0}^{r-3}{r\choose j}{r(k-3)+j-1 \choose \underbrace{k-2,\dots,k-2}_{j},\underbrace{k-3,\dots,k-3,k-4}_{r-j}}(r-j)(3-e_{r-j})(r-j)! \\ &= \sum_{j=0}^{r-3}\frac{r!}{j!}\frac{(r(k-3)+j-1)!(k-3)}{\left((k-2)!\right)^{j}\left((k-3)!\right)^{r-j}}(r-j)(3-e_{r-j}) \\ &= \frac{(r(k-2))!}{\left((k-2)!\right)^r}\sum_{j=0}^{r-3}\frac{r!}{j!}\frac{(k-3)(k-2)^{r-j}(r(k-2))!}{(r(k-3)+j-1)!}(r-j)(3-e_{r-j}).\end{align*} Applying the uniform bound $3-e_{r-j}>3-e$, we see that Theorem \ref{wasteasym} will follow if we can establish \begin{equation}\label{goalsum}\sum_{j=0}^{r-3}a_j \geq \frac{1}{2}(1-o(1)), \end{equation} where $ a_j=\dfrac{r!}{j!}\dfrac{(k-3)(k-2)^{r-j}(r(k-3)+j)!(r(k-3)+j)}{(r(k-2))!(r(k-3)+j)}(r-j).$  For $j\geq r/10$, we have $j\to \infty$ as $r\to \infty$, hence Stirling's approximation $n!=\sqrt{2\pi n}(n/e)^n(1+o(1))$ applies to all four factorials in the definition of $a_j$. Therefore, bounding the resulting square root below by $1$ and canceling all factors of $e$, we have 
\begin{align*}a_j &= \sqrt{\frac{r^2(k-2)}{j(r(k-3)+j)}}\frac{(r/e)^r}{(j/e)^j}\frac{((r(k-3)+j)/e)^{r(k-3)+j}}{(r(k-2)/e)^{r(k-2)}}\frac{(k-2)^{r-j}(k-3)(r-j)}{r(k-3)+j}(1-o(1)) \\ &\geq \frac{r^r}{j^j}\frac{(r(k-3)+j)^{r(k-3)+j}}{(r(k-2))^{r(k-2)}}\frac{(k-2)^{r-j}(k-3)(r-j)}{r(k-3)+j}(1-o(1)), 
\end{align*}
provided $j\geq r/10$. Defining $\delta$ by $j=(1-\delta)r$, we see \begin{align*}a_j&\geq \frac{r^r}{((1-\delta)r)^{(1-\delta)r}} \frac{(r(k-2-\delta))^{r(k-2-\delta)}}{(r(k-2))^{r(k-2)}}\frac{(k-2)^{\delta r}(k-3)\delta r }{r(k-2-\delta)} (1-o(1)) \\ &= (1-\delta)^{-(1-\delta)r} \frac{(k-2-\delta)^{r(k-2-\delta)}}{(k-2)^{r(k-2)}}\frac{(k-2)^{\delta r}(k-3)\delta}{k-2-\delta}(1-o(1)) \\ &\geq\left(\frac{k-3}{k-2}\right)\delta(1-\delta)^{-(1-\delta)r} \left(1-\frac{\delta}{k-2}\right)^{r(k-2-\delta)}(1-o(1)) \\ &= \left(\frac{k-3}{k-2}\right)\delta(1-\delta)^{-(1-\delta) r}(1-\delta)^r\left(1-\frac{\delta}{k-2}\right)^{-\delta r}(1-o(1))\\ & =\left(\frac{k-3}{k-2}\right)\delta(1-\delta)^{\delta r}\left(1-\frac{\delta}{k-2}\right)^{-\delta r} (1-o(1)),\end{align*} provided $3/r\leq \delta \leq 9/10$, where the second to last line uses that $(1-\epsilon)^m\geq 1-m\epsilon$ for $\epsilon,m>0$. Defining $u$ by $\delta=u/\sqrt{r}$, we then have 
\begin{equation*}a_j \geq \left(\frac{k-3}{k-2}\right)\frac{u}{\sqrt{r}} \left(1-\frac{u}{\sqrt{r}}\right)^{u\sqrt{r}}\left(1-\frac{u}{(k-2)\sqrt{r}}\right)^{-u\sqrt{r}}(1-o(1)) %\\ &=\left(\frac{k-3}{k-2}\right)\frac{u}{\sqrt{r}} \left(\left(1-\frac{u}{\sqrt{r}}\right)^{\sqrt{r}}\right)^u\left(\left(1-\frac{u}{(k-2)\sqrt{r}}\right)^{\sqrt{r}}\right)^{-u}(1-o_{r\to \infty}(1)),
\end{equation*} 
provided $3/\sqrt{r}\leq u \leq 9\sqrt{r}/10$. Since we are proving an asymptotic statement as $r\to \infty$, it is tempting to replace $\left(1-\frac{u}{\sqrt{r}}\right)^{u\sqrt{r}}$ with $e^{-u^2}(1+o(1))$, but since $u$ is not fixed with respect to $r$ we must give this statement its due consideration.  Using the Taylor approximation $\log(1-x)=-x+O(x^2)$ for $0\leq x \leq 1$, where $\log$ denotes the natural logarithm and $O(f(x))$ means less than a constant times $f(x)$, we see  \begin{equation*}\log \left( \frac{\left(1-\frac{u}{\sqrt{r}}\right)^{u\sqrt{r}}}{e^{-u^2}}\right)=u\sqrt{r}\log\left(1-\frac{u}{\sqrt{r}}\right)+u^2=u\sqrt{r}\left(-\frac{u}{\sqrt{r}}+O\left(\frac{u^2}{r}\right)\right)+u^2=O\left(\frac{u^3}{\sqrt{r}}\right).\end{equation*} The right hand side tends to $0$ as $r\to \infty$ if $u\leq r^{1/8}$, so the term inside the logarithm tends to $1$. The same reasoning applies to $\left(1-\frac{u}{(k-2)\sqrt{r}}\right)^{-u\sqrt{r}}=e^{u^2/(k-2)}(1+o(1))$.  Putting things together, we have \begin{equation*}a_j\geq \left(\frac{k-3}{k-2}\right) \frac{u}{\sqrt{r}} e^{-u^2\left(\frac{k-3}{k-2}\right)} (1-o(1)), \end{equation*} provided $3/\sqrt{r}\leq u \leq r^{1/8}$, which after backtracking through the substitutions yields \begin{equation*}\sum_{j=0}^{r-3} a_j \geq \left(\frac{k-3}{k-2}\right)\sum_{r-r^{5/8}\leq j \leq r-3} \left(1-\frac{j}{r}\right)e^{-(\sqrt{r}-j/\sqrt{r})^2\left(\frac{k-3}{k-2}\right)} (1-o(1)). \end{equation*} Further, we claim that the sum on the right hand side can be asymptotically bounded below by the corresponding integral, as \begin{align*}&\int_j^{j+1}\left(1-\frac{x}{r}\right)e^{-(\sqrt{r}-x/\sqrt{r})^2\left(\frac{k-3}{k-2}\right)} dx \\ =& \frac{1}{2}\int_{(\sqrt{r}-\frac{j+1}{\sqrt{r}})^2}^{(\sqrt{r}-\frac{j}{\sqrt{r}})^2} e^{-y\left(\frac{k-3}{k-2}\right)}dy \\ =&\frac{k-2}{2(k-3)}\left(e^{-\left(\sqrt{r}-\frac{j+1}{\sqrt{r}}\right)^2\left(\frac{k-3}{k-2}\right)}-e^{-\left(\sqrt{r}-\frac{j}{\sqrt{r}}\right)^2\left(\frac{k-3}{k-2}\right)}\right) \\=& \frac{k-2}{2(k-3)}e^{-\left(\sqrt{r}-\frac{j}{\sqrt{r}}\right)^2\left(\frac{k-3}{k-2}\right)}\left(e^{\left(2-\frac{2j+1}{r}\right)\left(\frac{k-3}{k-2}\right)}-1 \right) \\ \geq &  \frac{k-2}{2(k-3)}e^{-\left(\sqrt{r}-\frac{j}{\sqrt{r}}\right)^2\left(\frac{k-3}{k-2}\right)} \left(\frac{k-2}{k-3}\right)\left(2-\frac{2j}{r}-\frac{1}{r}\right) \\ =& \left(1-\frac{j}{r}\right)e^{-(\sqrt{r}-j/\sqrt{r})^2\left(\frac{k-3}{k-2}\right)} (1-o(1)), &\end{align*} where the second to last line uses that $e^x-1\geq x$ for $x\geq 0$. Therefore, \begin{equation*}\sum_{j=0}^{r-3} a_j \geq \left(\frac{k-3}{k-2}\right)\int_{r-r^{5/8}}^{r-3} \left(1-\frac{x}{r}\right)e^{-(\sqrt{r}-x/\sqrt{r})^2\left(\frac{k-3}{k-2}\right)} dx (1-o(1)). \end{equation*} Checking in with our goal, \eqref{goalsum}, and hence Theorem \ref{wasteasym}, will be established if we can show that the right hand side above is bounded below by $\frac{1}{2}(1-o(1))$. 

\noindent Indeed, returning to the substitution $y=(\sqrt{r}-x/\sqrt{r})^2$ and evaluating a convergent improper integral, we see \begin{align*}&\left(\frac{k-3}{k-2}\right)\int_{r-r^{5/8}}^{r-3} \left(1-\frac{x}{r}\right)e^{-(\sqrt{r}-x/\sqrt{r})^2\left(\frac{k-3}{k-2}\right)} dx \\ =& \frac{1}{2}\left(\frac{k-3}{k-2}\right) \int_{\frac{9}{r}}^{r^{1/4}}  e^{-y\left(\frac{k-3}{k-2} \right)}dy\\ = & \frac{1}{2}\left(\frac{k-3}{k-2}\right)\int_{0}^{\infty}  e^{-y\left(\frac{k-3}{k-2} \right)}dy (1-o(1)) \\ = & \frac{1}{2}(1-o(1)),\end{align*} and the theorem follows. \end{proof}

\section{Examples and computations} \label{compsec}

In this section, we display some collected numerical data in order to examine the efficiency of our estimates for $M_r(k)$ and $w_r(k)$, particularly in comparison to \eqref{cbound} and \eqref{jonib}, previously documented multinomial coefficient upper bounds for multicolor Ramsey numbers. The displayed values for $M_r(k)$ and $w_r(k)$ were computed via an algorithm coded in Python that enacts the properties listed in Definition \ref{wastedef}. The computations were conducted prior to our establishment of the exact formula for $M_r(k)$ provided in Theorem \ref{exact}, which reassuringly matches the collected data in all cases. For exact values of $w_r(k)$, the computational approach remains necessary. 

For clarity, we note that $M_r(k)-w_r(k)$ perfectly captures the upper bound for $R_r(k)$ available from Proposition \ref{recurs}. Further, with $\tilde{M}_r(k)$ and $\tilde{w}_r(k)$ as defined below, $\tilde{M}_r(k)-\tilde{w}_r(k)$ is the upper bound on $R_r(k)$ yielded by the convenient formula in Theorem \ref{main} and the lower bound in Theorem \ref{wasterec}, while $M_r(k)-\tilde{w}_r(k)$ is the upper bound on $R_r(k)$ yielded by Theorems \ref{exact} and \ref{wasterec}, which is the best explicit upper bound available from the results of this paper. The utilized notation and collected data are as follows.

\

\noindent $\tilde{M}_r(k)=\left(4-\frac{2}{k-1}\right)\frac{(r(k-2))!}{\left((k-2)!\right)^r}$, upper bound on $M_r(k)$ proved in Theorem \ref{main}

\noindent  $\tilde{w}_r(k):$ lower bound on $w_r(k)$ yielded by Proposition \ref{waste3} and Theorem \ref{wasterec}

%\noindent $M_r(k)-w_r(k)$: true upper bound on $R_r(k)$ yielded by the pigeonhole principle, obtained by repeated applications of Prop. \ref{recurs} via computer

%\noindent  $\tilde{M}_r(k)-\tilde{w}_r(k):$ easily computable upper bound on $R_r(k)$ proved in this paper 

\noindent $T_r(k)=\frac{(r(k-2)+2)!}{\left((k-1)!\right)^2\left((k-2)!\right)^{r-2}}$, upper bound for $R_r(k)$ as appears in \cite{joni}

\noindent  $C_r(k)=\frac{(r(k-1))!}{\left((k-1)!\right)^r}$, classical upper bound for $R_r(k)$

\begin{center}
\begin{table}[H]
\caption{ \ } 
%Some computed values of $M_r(k)$ and $w_r(k)$, as compared with our estimates $\tilde{M}_r(k)$ and $\tilde{w}_r(k)$, as well as previously documented upper bounds on $R_r(k)$}
\renewcommand{\arraystretch}{1.5}
\begin{tabular}{|| c | c || c | c || c | c ||  c | c ||}
\hline
\hline
$r$ & $k$ & $\tilde{M}_r(k)$ & $M_r(k)$ & $\tilde{w}_r(k)$ & $w_r(k)$ &  $T_r(k)$ & $C_r(k)$ \\
\Xhline{0.8pt}

$3$ & $4$ & $300$ & $288$ & $16$ & $16$ &  $560$ & $1680$ \\ 
\hline

$4$ & $4$ & $8400$ & $7920$ & $514$ & $554$ &  $25200$ & $369600$ \ \\ 
\hline

$5$ & $4$ & $378000$ & $352800$ & $24978$  & $26788$  & $1663200$ & $168168000$ \\ 
\hline

$3$ & $5$ & $5880$ & $5520$ & $214$ & $271$ & $11550$ & $34650$ \\ 
\hline

$4$ & $5$ & $1293600$ & $1182720$ & $60694$ & $75022$ &  $4204200$ & $63063000$ \\ 
\hline

$3$ & $6$ & $124740$ & $115500$ & $4644$ & $5248$ &  $252252$ & $756756$ \\ 
\hline

\end{tabular}
\end{table}
\end{center}

\section{Concluding remarks} \label{conc}

Throughout this paper, we have focused on establishing upper bounds for $R_r(k)$ stemming from the pigeonhole principle. While Proposition \ref{recurs} is one of few tools available to get upper bounds on multicolor Ramsey numbers for $r\geq 3$, there are some more refined estimates in certain cases. In particular, Fettes, Kramer, and Radziszowski showed that $R_4(3)\leq 62$, which beats the bound of $66$ yielded by Proposition \ref{r3}. Further, Eliahou \cite{eli}, building on work of Xu, Xie, and Chen \cite{Xu}, showed that \begin{equation}\label{elib} R_r(3)\leq \left(e-\frac{d}{24}\right)r!+1  \end{equation} for all $r\geq 4$, where $d=66-R_4(3)\geq 4$. 

One could ask if these improved estimates are in any way compatible with the results of this paper, and indeed they are. Specifically, rather than define the waste function $w_r(k)$ recursively in order to perfectly capture the strength of Proposition \ref{recurs}, we could have alternatively defined $$w_r(k)=M_r(k)-R_r(k)$$ to be the \textit{true} difference between the main term and the diagonal Ramsey number. Under this definition, properties (i) and (iii) for $w_r$ in Definition \ref{wastedef} become lower bounds rather than equalities, which only works in our favor, and the inequality portion of Proposition \ref{diff} no longer requires proof, as equality holds by definition. Theorem \ref{wasterec} holds just as before, and leads, along with Corollary \ref{mainasym3}, to the following stronger form of Theorem \ref{wasteasym}.

\begin{theorem}\label{compat} Suppose $r\geq 2$ and $k\geq 4$ are integers and $c>0$. If \begin{equation}\label{genhyp} R_r(3)\leq (3-c)r!(1+o(1)), \end{equation} then \begin{equation*}R_r(k)\leq \left(3-\frac{c}{2}\right)\frac{(r(k-2))!}{\left((k-2)!\right)^r}(1+o_{r\to \infty}(1)). \end{equation*}
\end{theorem}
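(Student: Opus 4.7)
The plan is to adopt the alternative framing of the waste function discussed in the paragraph preceding Theorem \ref{compat}, namely to redefine $w_r(k) := M_r(k) - R_r(k)$. With this definition, the inequality in Proposition \ref{diff} becomes an equality by fiat, and properties (i) and (iii) of Definition \ref{wastedef} are lower bounds on $w_r$ rather than equalities; since the proof of Theorem \ref{wasterec} only ever used property (iii) as a lower bound on $w_r(k)$ in terms of $w_r$-values with one reduced coordinate (plus an additive $r-2$), the entire derivation of that lower bound carries through verbatim under the new definition.

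Next I would upgrade the base case. From Theorem \ref{exact} (or by the easy direct recursion $M_r(3) = r\, M_{r-1}(3)$ with base $M_2(3) = 6$) one has the exact identity $M_r(3) = 3\, r!$. The hypothesis \eqref{genhyp} then gives
\begin{equation*}
w_r(3) \;=\; M_r(3) - R_r(3) \;\geq\; c\, r!\,(1 - o_{r\to\infty}(1)),
\end{equation*}
which is the key replacement for the classical estimate $w_r(3) = (3 - e_r)r! - 1 \sim (3-e)r!$ used in the proof of Theorem \ref{wasteasym}.

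The third step is to replay the proof of Theorem \ref{wasteasym} using this stronger base-case bound. In that proof the factor $(3 - e_{r-j})(r-j)!$ appeared inside each summand $a_j$; it is now replaced by $c\,(r-j)!\,(1 - o(1))$, valid whenever $r-j$ is large. The crucial observation is that the dominant contribution to $\sum_{j=0}^{r-3} a_j$ came from $j$ in a window of the form $[r - r^{5/8},\, r-3]$, i.e.\ from $r - j \in [3, r^{5/8}]$. By shrinking this window slightly (for example to $r - j \in [\log r,\, r^{5/8}]$) one forces $r - j \to \infty$ so that the hypothesis applies uniformly, while the Gaussian decay in the integral comparison shows that removing the $r - j < \log r$ tail costs only $o(1)$. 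The rest of the Stirling, log-expansion, and integral-to-Riemann-sum arguments are unchanged, and the factor $(3-e)/2$ in the final estimate gets replaced by $c/2$, yielding
\begin{equation*}
w_r(k) \;\geq\; \tfrac{c}{2}\, \frac{(r(k-2))!}{\bigl((k-2)!\bigr)^r}\,\bigl(1 - o_{r\to\infty}(1)\bigr).
\end{equation*}

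Finally, combining this with the asymptotic $M_r(k) = 3\,\frac{(r(k-2))!}{((k-2)!)^r}(1 + o_{r\to\infty}(1))$ from Corollary \ref{mainasym3} and using $R_r(k) = M_r(k) - w_r(k)$ delivers the target estimate $R_r(k) \leq (3 - c/2)\,\frac{(r(k-2))!}{((k-2)!)^r}(1 + o_{r\to\infty}(1))$. I expect the only genuine obstacle to be the bookkeeping in the third step: one must confirm that the uniformity in $r-j$ implicit in the hypothesis is sufficient, and that truncating the index range to ensure $r-j \to \infty$ does not spoil the matching of the lower bound to the Gaussian integral. Both are mild once one notices that the proof of Theorem \ref{wasteasym} already discards a much larger tail without loss.
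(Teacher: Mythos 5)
Your proposal is correct and follows essentially the same route as the paper: redefine $w_r(k)=M_r(k)-R_r(k)$, note that Theorem \ref{wasterec} survives since property (iii) is only needed as a lower bound, replace the factor $3-e_{r-j}$ by $c(1-o(1))$ via the hypothesis, and truncate the summation range so that $r-j\to\infty$ (you use a $\log r$ cutoff where the paper uses $3r^{1/4}$, but both leave the limiting Gaussian integral unchanged). No gaps.
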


\noindent The nominal modifications of the proof of Theorem \ref{wasteasym} needed to establish Theorem \ref{compat} are listed below:

\ 

\begin{itemize} \item Replace the constant $3-e$ with the constant $c>0$ from the hypothesis. \\ \item Reduce the upper limit of summation in \eqref{goalsum} to $r-3r^{1/4}$. This assures that $r-j\to \infty$ as $r\to \infty$, and allows for the invocation of \eqref{genhyp} without concern for small values of $r$. This raises the lower limit of integration on the resulting integral in $y$ from $9/r$ to $9/\sqrt{r}$, which still tends to $0$ as $r\to \infty$. \\ \item The remainder of the proof is identical. \\
\end{itemize}

\noindent Finally, we conclude our discussion with the following corollary of Theorem \ref{compat} and \eqref{elib}, which to our knowledge is the best-known asymptotic ($r\to \infty$) upper bound for diagonal Ramsey numbers for $k\geq 4$.

\ 

\begin{corollary}Let $d=66-R_4(3)\geq 4$. For integers $r\geq 2$ and $k\geq 4$, \begin{equation*}R_r(k)\leq \left(\frac{3+e}{2}-\frac{d}{48}\right)\frac{(r(k-2))!}{\left((k-2)!\right)^r}(1+o_{r\to \infty}(1)). \end{equation*} 
\end{corollary}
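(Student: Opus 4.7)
The plan is to apply Theorem \ref{compat} with Eliahou's bound \eqref{elib} serving as the required hypothesis \eqref{genhyp}. First I would match \eqref{elib}, which reads $R_r(3) \leq (e - d/24)r! + 1$ for $r \geq 4$, to the form $R_r(3) \leq (3-c)r!(1+o(1))$. This forces the choice $c = 3 - e + d/24$, and since $e < 3$ and $d \geq 4$, this value of $c$ is strictly positive, as required. The additive $+1$ term is negligible compared to $r!$ and is absorbed into the factor $1 + o(1)$ as $r \to \infty$, so the hypothesis of Theorem \ref{compat} is genuinely satisfied.

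With this value of $c$ in hand, Theorem \ref{compat} immediately yields $R_r(k) \leq (3 - c/2)\,\frac{(r(k-2))!}{((k-2)!)^r}(1 + o_{r \to \infty}(1))$ for integers $r \geq 2$ and $k \geq 4$. A short arithmetic simplification gives $3 - c/2 = 3 - (3 - e + d/24)/2 = (3+e)/2 - d/48$, which is exactly the constant appearing in the statement of the corollary.

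There is no substantive obstacle here: the heavy lifting is already done by Theorem \ref{compat} and by the Xu--Xie--Chen--Eliahou estimate from \cite{eli,Xu}, and the proof of the corollary reduces to identifying the correct value of $c$ and performing the resulting short computation. The only care required is to confirm that $c > 0$ (so that Theorem \ref{compat} applies) and that the lower-order $+1$ term in \eqref{elib} can be safely swept into the $o(1)$ factor, both of which are immediate.
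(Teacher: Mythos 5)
Your proposal is correct and matches the intended derivation exactly: the paper states this corollary as an immediate consequence of Theorem \ref{compat} and \eqref{elib}, with $c = 3 - e + d/24 > 0$ and the arithmetic $3 - c/2 = \frac{3+e}{2} - \frac{d}{48}$ being precisely the computation you carried out. Your observations that the additive $+1$ in \eqref{elib} is absorbed into the $1+o(1)$ factor and that the restriction $r \geq 4$ is harmless for an asymptotic statement are both accurate.
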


\

\noindent \textbf{Acknowledgements:}  This research was initiated during the Summer 2021 Kinnaird Institute Research Experience at Millsaps College. All authors were supported during the summer by the Kinnaird Endowment, gifted to the Millsaps College Department of Mathematics. At the time of submission, all authors except Alex Rice were Millsaps College undergraduate students.  

\newpage

\end{document}